\def\Lc{{\mathcal{L}}}
\def\B'c{{\mathcal{B'}}}
\def\U'c{{\mathcal{U'}}}
\def\opn#1#2{\def#1{\operatorname{#2}}} 
\opn\chara{char}
\opn\length{\ell}
\opn\projdim{proj\,dim}
\opn\injdim{inj\,dim}
\opn\ini{in}
\opn\rank{rank}
\opn\Tiefe{Tiefe}
\opn\grade{grade}
\opn\height{height}
\opn\embdim{emb\,dim}
\opn\codim{codim}
\opn\Tr{Tr}
\opn\bigrank{big\,rank}
\opn\superheight{superheight}\opn\lcm{lcm}
\opn\trdeg{tr\,deg}%
\opn\reg{reg}
\opn\lreg{lreg}
\opn\deg{deg}
\opn\lcm{lcm}
\opn\div{div}
\opn\Div{Div}
\opn\cl{cl}
\opn\Cl{Cl}
\opn\Spec{Spec}
\opn\Supp{Supp}
\opn\supp{supp}
\opn\Sing{Sing}
\opn\Ass{Ass}
\opn\Ann{Ann}
\opn\Rad{Rad}
\opn\Soc{Soc}
\opn\Ker{Ker}
\opn\Coker{Coker}
\opn\Im{Im}
\opn\Hom{Hom}
\opn\Tor{Tor}
\opn\Ext{Ext}
\opn\End{End}
\opn\Aut{Aut}
\opn\id{id}
\opn\nat{nat}
\opn\GL{GL}
\opn\SL{SL}
\opn\mod{mod}
\opn\ord{ord}
\opn\depth{depth}
\opn\set{set}
\opn\Shad{Shad}
\opn\aff{aff}
\opn\con{conv}
\opn\relint{relint}
\opn\st{st}
\opn\lk{lk}
\opn\cn{cn}
\opn\core{core}
\opn\vol{vol}
\opn\gr{gr}
\def\pot#1#2{#1[\kern-0.28ex[#2]\kern-0.28ex]}
\opn\dirlim{\underrightarrow{\lim}}
\opn\invlim{\underleftarrow{\lim}}
\def\pnt{{\raise0.5mm\hbox{\large\bf.}}}
\def\Implies{\ifmmode\Longrightarrow \else
     \unskip${}\Longrightarrow{}$\ignorespaces\fi}
\def\implies{\ifmmode\Rightarrow \else
     \unskip${}\Rightarrow{}$\ignorespaces\fi}
\def\iff{\ifmmode\Longleftrightarrow \else
     \unskip${}\Longleftrightarrow{}$\ignorespaces\fi}
\newtheorem{Theorem}{Theorem}[section]
\newtheorem{Lemma}[Theorem]{Lemma}
\newtheorem{Corollary}[Theorem]{Corollary}
\newtheorem{Proposition}[Theorem]{Proposition}
\newtheorem{Remark}[Theorem]{Remark}
\newtheorem{Definition}[Theorem]{Definition}
\let\epsilon=\varepsilon
\let\phi=\varphi
\let\kappa=\varkappa
\title{On the minimal graded free resolution of powers of lexsegment ideals}
\author{Anda Olteanu}
\thanks{The author was supported by the CNCS-UEFISCDI project PN II-RU PD 23/06.08.2010 and by the strategic grant POSDRU/89/1.5/S/58852, Project ``Postdoctoral program for training scientific researchers" co-financed by the European Social Fund within the Sectorial Operational Program Human Resources Development 2007 - 2013"}
\address{Faculty of Mathematics and Computer Science, Ovidius University, Bd.\ Mamaia 124,
 900527 Constanta, Romania,} \email{olteanuandageorgiana@gmail.com}
\begin{document}

\begin{abstract}  We consider powers of lexsegment ideals with a linear resolution (equivalently, with linear quotients) which are not completely lexsegment ideals. We give a complete description of their minimal graded free resolution.\\

Keywords: Lexsegment ideals, linear resolution, linear quotients, monomial ideal.\\ 

MSC 2010: Primary 13D02; Secondary 13C15, 13H10, 13P10.

\end{abstract}

\maketitle

\section*{Introduction}
Let $S=K[x_1,\ldots,x_n]$ be the polynomial ring in $n$ variables over a field $K$ and $<_{lex}$ be the lexicographical order with respect to $x_1>_{lex}\cdots>_{lex} x_n$. Fix an integer $d\geq2$ and let $u$ and $v$ be two monomials of degree $d$ in $S$ such that $u>_{lex}v$. The lexsegment ideal determined by the monomials $u$ and $v$, $(\mathcal{L}(u,v))$, is the monomial ideal generated by all the monomials $w$ in $S$ of degree $d$ which have the property that $u>_{lex}w>_{lex}v$.

Defined by Hulett and Martin \cite{HM}, lexsegment ideals have been studied in several papers \cite{ADH}, \cite{DH}, \cite{EOS}, \cite {EO}, \cite{I}. Their properties such as being Gotzmann, normally torsion-free or sequentially Cohen--Macaulay have been completely characterized \cite{OOS}, \cite{O}, \cite{I}. All the characterizations are in terms of the ends of the lexsegment. 

It is known that any ideal with linear quotients generated in one degree has a linear resolution, but the converse does not hold \cite{CH}. In \cite{EOS} it is proved that these two notions are equivalent for the class of lexsegment ideals. Moreover, for the case of completely lexsegment ideals with linear quotients, the minimal graded free resolution can be described. It is natural to ask whether the powers of an ideal with linear quotients have again linear quotients. Conca's example shows that this is not true in general \cite{C}, but for lexsegment ideals, this property is preserved by their powers, \cite{EO}.

We will consider powers of lexsegment ideals with a linear resolution which are not completely lexsegment ideal and we describe their minimal graded free resolution by proving that their decomposition function is regular and using the result of Herzog and Takayama for this case \cite{HT}. In this way, the minimal graded free resolution of lexsegment ideals with linear quotients is completely described.

The paper is organized in three sections. In the first section, we fix all the notations and the terminology and we recall some known results which will play a key role in the proofs.

In the second section, we consider powers of a lexsegment ideal $I$ with linear quotients which is not a completely lexsegment ideal. We describe the decomposition function associated to the increase reverse lexicographical order and we show that this is regular. By using the results of Herzog and Takayama \cite{HT}, we may write the minimal graded free resolution of $I^k$, for all $k\geq 1$.

In the last section we consider an example in order to illustrate the results.

\section{Preliminaries}
Let $S=K[x_1,\ldots,x_n]$ be the polynomial ring in $n$ variables over a field $K$ and we fix the lexicographical order, $<_{lex}$, on $S$ with respect to the order of the variables $x_1>_{lex}\cdots>_{lex}x_n$. For a monomial $m=x_1^{a_1}\cdots x_n^{a_n}$, we denote by $\nu_i(m)$ the exponent of the variable $x_i$ in the monomial $m$, that is $\nu_i(m)=a_i$. The set $\supp(m)=\{i\ :\ \nu_i(m)\neq 0\}$ is called the \it{support }\rm of the monomial $m$. Let us denote $\min(m):=\min(\supp(m))$ and $\max(m):=\max(\supp(m))$. If $I$ is a monomial ideal in $S$, then $G(I)$ will be the set of its minimal monomial generators.

For $d\geq 2$ an integer, we denote by $\mathcal{M}_d(S)$ the set of all the monomials of degree $d$ in $S$. Let $u,v\in\mathcal{M}_d(S)$ be two monomials such that $u>_{lex}v$. The set $$\mathcal{L}(u,v)=\{w\in\mathcal{M}_d(S)\ :\ u>_{lex}w>_{lex} v\}$$ is called \it{the lexsegment set }\rm determined by the monomials $u$ and $v$. A \it{lexsegment ideal }\rm is a monomial ideal generated by a lexsegment set. An important notion in the study of the lexsegment ideals is the shadow of a set of monomials. For a set of monomials $T\subseteq S$, one may define its \it{shadow }\rm as being the set $\Shad(T)=\{x_iw\ :\ 1\leq i\leq n,\ w\in T\}$. Moreover, the $i$-th shadow is recursively defined as $\Shad^i(T)=\Shad^{i-1}(\Shad(T))$. 
 
A lexsegment set is a \textit{completely lexsegment set} if all the iterated shadows are again lexsegment sets. An ideal generated by a completely lexsegment set is called a \textit{completely lexsegment ideal}. 

In \cite{HT}, is considered the class of ideals with linear quotients. We recall the definition for the particular class of monomial ideals.
\begin{Definition}\cite{HT}\rm{\ } A monomial ideal $I\subseteq S$ \textit{has linear quotients} if there exists an ordering of its minimal monomial generators $m_1,\ldots,m_r$ such that the ideal $(m_1,\ldots,m_{i-1}):(m_i)$ is generated by a set of variables, for all $i\geq 2$.
\end{Definition}

 If $I$ is a monomial ideal which has linear quotients with respect to the sequence $m_1,\ldots,m_r$, then one may consider the sets $$\set(m_i)=\{j\ :\ x_j\in(m_1,\ldots,m_{i-1}):(m_i)\},$$ for all $i\geq2$. 
 
The following result collects known results on lexsegment ideals.

\begin{Theorem}[\cite{ADH}, \cite{EOS}, \cite{EO}]
Let $u=x_1^{a_1}\cdots x_{n}^{a_n}$ with $a_1>0$ and $v=x_1^{b_1}\cdots x_n^{b_n}$ be monomials of degree $d$ with $u\geq_{lex}v$ and let $I=(\mathcal{L}(u,v))$ 
be a  lexsegment ideal. Then the following statements are equivalent;
\begin{itemize}
\item [(1)] $I$ has a linear resolution.
\item [(2)] $I$ has linear quotients.
\item [(3)] All the powers of $I$ have linear quotients.
\item [(4)] All the powers of $I$ have a linear resolution.
\end{itemize}
\end{Theorem}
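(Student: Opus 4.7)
The strategy is to close a cycle of implications. Three of the arrows are essentially formal: $(2) \Rightarrow (1)$ and $(3) \Rightarrow (4)$ follow from the general fact that any monomial ideal generated in a single degree with linear quotients automatically has a linear resolution, while $(4) \Rightarrow (1)$ and $(3) \Rightarrow (2)$ are just specialization to $k=1$. Hence the real content lies in $(1) \Rightarrow (2)$ (the lexsegment-specific passage from linear resolution to linear quotients) and $(2) \Rightarrow (3)$ (the stability of linear quotients under powers).

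For $(1) \Rightarrow (2)$ I would follow the approach of [EOS] and split on the size of $a_1 - b_1$. When $a_1 \geq b_1 + 2$, the lexsegment $\mathcal{L}(u,v)$ contains every monomial of degree $d$ whose exponent of $x_1$ lies strictly between $b_1$ and $a_1$; this block structure makes a reverse-lexicographic ordering of $G(I)$ satisfy the linear-quotients condition by a direct colon-ideal computation. When $a_1 = b_1$ the ideal is completely lexsegment and the conclusion is classical. The delicate case is $a_1 = b_1 + 1$: here I would first show, using the characterization from [ADH] of when a lexsegment ideal has a linear resolution, that this hypothesis forces a precise combinatorial condition on the ``tails'' $x_2^{a_2}\cdots x_n^{a_n}$ and $x_2^{b_2}\cdots x_n^{b_n}$, and then construct an explicit ordering of $G(I)$ witnessing linear quotients by splicing the ``upper'' part (monomials starting with $x_1^{a_1}$) and the ``lower'' part (monomials starting with $x_1^{b_1}$) of the lexsegment.

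For $(2) \Rightarrow (3)$ I would follow [EO] and work directly with $G(I^k)$. Each minimal generator of $I^k$ can be written as a product of $k$ elements of $G(I)$ in a canonical (lexicographically maximal or reverse-lexicographically minimal) way, and the resulting set inherits a lexsegment-like structure that again admits a reverse-lexicographic ordering with variable-generated colon ideals. An induction on $k$, combined with the case analysis already carried out in $(1) \Rightarrow (2)$, produces the required ordering and verifies the colon condition by reducing each step to a computation on generators of $I$.

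The main obstacle, as usual for lexsegment ideals, is the boundary case $a_1 = b_1 + 1$ of $(1) \Rightarrow (2)$: pinning down the exact condition on the tails of $u$ and $v$ that separates the ideals with linear resolution from those without, and then producing an ordering whose colon ideals remain generated by variables at the transition from generators beginning with $x_1^{a_1}$ to generators beginning with $x_1^{b_1}$, requires careful monomial-by-monomial bookkeeping rather than a single clean structural argument. Once this is managed, the analogous transitions in $I^k$ can be handled inductively using the same combinatorics.
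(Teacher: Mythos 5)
First, a point of comparison: the paper does not prove this theorem at all -- it is stated as a compilation of results quoted from \cite{ADH}, \cite{EOS} and \cite{EO}, so the only thing to measure your proposal against is the division of labour in those references. Your logical skeleton is the right one and matches theirs exactly: $(2)\Rightarrow(1)$ and $(3)\Rightarrow(4)$ come from the general fact that an ideal generated in one degree with linear quotients has a linear resolution, $(4)\Rightarrow(1)$ and $(3)\Rightarrow(2)$ are the case $k=1$, and all the content sits in $(1)\Rightarrow(2)$ (the theorem of \cite{EOS}) and $(2)\Rightarrow(3)$ (the theorem of \cite{EO}). So far so good.

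However, your plan for $(1)\Rightarrow(2)$ contains a step that is simply false, and the rest is deferred rather than proved. The claim that ``when $a_1=b_1$ the ideal is completely lexsegment'' does not hold: take $u=x_1x_2^2$ and $v=x_1x_2x_3$ in $K[x_1,x_2,x_3]$; then $\mathcal{L}(u,v)=\{x_1x_2^2,\,x_1x_2x_3\}$, and $x_1^2x_3^2$ lies lexicographically between $x_1^2x_2^2$ and $x_1x_2x_3^2$ but is not in $\Shad(\mathcal{L}(u,v))$, so the ideal is not completely lexsegment (although it does have a linear resolution and linear quotients). The correct reduction in this case -- recorded in the paper's Remark 1.4 -- is that for $a_1=b_1$ one factors out $x_1^{a_1}$ and passes to a lexsegment of degree $d-a_1$ in fewer variables, i.e.\ an inductive reduction, not an appeal to the completely lexsegment case. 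More importantly, the relevant dichotomy in \cite{ADH} and \cite{EOS} is ``completely lexsegment versus not completely lexsegment'' (after normalizing to $x_1\mid u$, $x_1\nmid v$), with the linear-resolution condition characterized separately in each branch, rather than a trichotomy on $a_1-b_1$; your case $a_1\geq b_1+2$ is handled in the literature via the completely lexsegment characterization of \cite{DH} together with \cite{ADH}, not by a direct colon computation as you assert without argument. Finally, both substantive implications $(1)\Rightarrow(2)$ and $(2)\Rightarrow(3)$ are left entirely as programmes (``I would follow the approach of\dots'', ``requires careful bookkeeping''), so even setting aside the error above, the proposal is an outline of where the proof lives rather than a proof.
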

If we restrict to the case of lexsegment ideals which are not completely lexsegment, we have the following result which combines \cite[Theorem 2.4]{ADH}, \cite[Theorem 2.1]{EOS}, \cite[Corollary 3.9]{EO}:

\begin{Theorem}\label{thnoncomp}
Let $u=x_1^{a_1}\cdots x_{n}^{a_n}$ with $a_1>0$ and $v=x_2^{b_2}\cdots x_n^{b_n}$ be monomials of degree $d$ with $u\geq_{lex}v$, and let $I=(\mathcal{L}(u,v))$ be a lexsegment ideal which is note completely lexsegment. Then the following statements are equivalent;
\begin{itemize}
\item [(1)] $u$ and $v$ have the following form:
	\[u=x_1x_{l+1}^{a_{l+1}}\cdots x_n^{a_n}\ \mbox{and } v=x_lx_{n}^{d-1}
\]
for some $l$, $2\leq l\leq n-1$.
\item [(2)] $I$ has a linear resolution.
\item [(3)] $I$ has linear quotients.
\item [(4)] All the powers of $I$ have linear quotients.
\item [(5)] All the powers of $I$ have a linear resolution.
\end{itemize}
\end{Theorem}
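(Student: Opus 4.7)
The plan is to assemble the five-way equivalence from three cited ingredients combined with some free implications. Because $I$ is generated in one degree, linear quotients always yields a linear resolution, so $(3)\Rightarrow(2)$ and $(4)\Rightarrow(5)$ are automatic; and $(4)\Rightarrow(3)$ together with $(5)\Rightarrow(2)$ follow by specializing to $k=1$. Thus it suffices to establish $(2)\Rightarrow(1)$, $(1)\Rightarrow(3)$, and $(3)\Rightarrow(4)$, and the whole cycle closes.

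For $(2)\Rightarrow(1)$, I would cite \cite[Theorem 2.4]{ADH}, which classifies the lexsegment ideals that are not completely lexsegment and have a linear resolution: under the running hypothesis $x_1\mid u$ and $x_1\nmid v$ (the latter from $v=x_2^{b_2}\cdots x_n^{b_n}$), the only endpoints admitting a linear resolution are precisely those displayed in (1). Morally, any deviation from $u=x_1 x_{l+1}^{a_{l+1}}\cdots x_n^{a_n}$ and $v=x_l x_n^{d-1}$ produces a generator whose colon with the initial piece of $G(I)$ carries a nonlinear relation, obstructing linearity. For $(1)\Rightarrow(3)$ I would invoke \cite[Theorem 2.1]{EOS}, or give a direct argument: order $G(\mathcal{L}(u,v))$ lexicographically and, using the very restricted shape of $u$ and $v$ in (1), check that each colon ideal $(m_1,\dots,m_{i-1}):(m_i)$ is generated by a set of variables. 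The check is combinatorial and falls out of the fact that every generator of $I$ is divisible by one of $x_1,\dots,x_l$.

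The substantive step, and the main obstacle, is $(3)\Rightarrow(4)$: preservation of linear quotients under taking powers. Here I would appeal to \cite[Corollary 3.9]{EO}. The approach is to place an explicit order on $G(I^k)$, built from the linear-quotients order on $G(I)$ together with a secondary order (e.g.\ increasing reverse lex on the multi-index recording the factorization), and to verify the colon condition generator by generator. This cannot be reduced to a formal property of linear quotients in general, since Conca's example \cite{C} shows that powers of ideals with linear quotients need not have linear quotients; what makes it work here is the very rigid combinatorics of multiplying lexsegment monomials of the form in (1). Once $(3)\Rightarrow(4)$ is secured, $(4)\Rightarrow(5)$ is free, completing the equivalence.
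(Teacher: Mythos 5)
Your proposal is correct and matches the paper's treatment: the theorem is stated there as a direct combination of \cite[Theorem 2.4]{ADH}, \cite[Theorem 2.1]{EOS}, and \cite[Corollary 3.9]{EO}, with no further proof given, and your cycle $(1)\Rightarrow(3)\Rightarrow(4)\Rightarrow(5)\Rightarrow(2)\Rightarrow(1)$ together with the free implications (linear quotients in a single degree implies linear resolution; specialize to $k=1$) closes the equivalence exactly as intended. The only added value in your write-up is making the logical assembly explicit, which the paper leaves implicit.
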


The order of the minimal monomial generators for which $I^k$ has linear quotients for all $k\geq1$, where $I$ is a lexsegment ideal with a linear resolution which is not completely lexsegment, is the increasing reverse lexicographical order. We recall that $m_1<_{revlex}m_2$ if there is some $s$, $1\leq s\leq n$, such that $\nu_i(m_1)=\nu_i(m_2)$ for all $i\geq s$ and $\nu_s(m_1)>\nu_s(m_2)$.

\begin{Remark}\rm$\ $ Let $u,v\in \mathcal{M}_d$ be two monomials, $u\geq_{lex}v$, and $I=(\mathcal{L}(u,v))$ be the corresponding lexsegment ideal.
We note that we may always assume that $x_1\mid u$ and $x_1\nmid v$. Indeed, if $x_1\mid v$ we denote $u=x_1^{a_1}\cdots x_n^{a_n}$ and $v=x_1^{b_1}\cdots x_n^{b_n}$, with $a_1\geq b_1>0$. If $a_1=b_1$, then $I=(\mathcal{L}(u,v))$ is isomorphic, as an $S$-module, to the ideal generated by the lexsegment $\mathcal{L}(u/x_1^{a_1},v/x_1^{b_1})$ of degree $d-a_1$. This lexsegment may be studied in the polynomial ring in a smaller number of variables. If $a_1>b_1$, then $I=(\mathcal{L}(u,v))$ and $(\mathcal{L}(u/x_1^{b_1},v/x_1^{b_1}))$ are isomorphic as $S$-modules and we have $\nu_1(u/x_1^{b_1})>1$ and $\nu_1(v/x_1^{b_1})=0$.  Therefore we will always assume that $x_1\mid u$ and $x_1\nmid v$.
\end{Remark}
\section{Powers of lexsegment ideals with a linear resolution which are not completely lexsegments}

In the sequel, we show that all the powers of lexsegment ideals with a linear resolution which are not completely lexsegment ideals have regular decomposition function with respect to the increasing reverse lexicographical order.
For two monomials $u,v$ of degree $d$, we denote by $\Lc(u,v)$ the corresponding lexsegment ideal. We will consider only the case when $x_1\mid u$ and $x_1\nmid v$.

By using Theorem~\ref{thnoncomp}, we will assume that $u$ and $v$ are monomials of degree $d\geq2$ such that $I=(\Lc(u,v))$ is a lexsegment ideal which is not a completely lexsegment ideal, and $u$ and $v$ have the following form:
	\[u=x_1x_{l+1}^{a_{l+1}}\cdots x_n\ \mbox{and } v=x_lx_{n}^{d-1}
\]
for some $l$, $2\leq l\leq n-1$.

For a lexsegment $\mathcal{L}(u,v)$, we assume that the elements are ordered by the increasing reverse lexicographical order. We denote by $I=(\Lc(u,v))$ the lexsegment ideal, and by $I^k_{<_{revlex} w}$, the ideal generated by all the monomials $z\in G(I^k)$ with $z<_{revlex} w$. $I^k_{\leq_{revlex}w}$ will be the ideal generated by all the monomials $z\in G(I^k)$ with $z\leq_{revlex} w$. 

\begin{Remark}\rm\label{set} If $m\in G(I^k)$ and $s\in\set(m)$, then there exists a monomial $w\in G(I^k)$, $w<_{revlex}m$ such that $x_sm=x_tw$, for some $t$, $1\leq t\leq n$. Since $m\neq w$, we must have $s\neq t$ and $x_t\mid m$. Moreover, $w=x_sm/x_t m<_{revlex}m$ implies that $s>t$.
\end{Remark}

In order to describe the decomposition function, we need some preparatory results.

\begin{Lemma}\label{setmin} Let $I=(\mathcal{L}(u,v))\subset S$ be a lexsegment ideal with a linear resolution which is not a completely lexsegment and $m\in G(I^k)$ a monomial. If $s\in\set(m)$, then $s>\min(m)$.
\end{Lemma}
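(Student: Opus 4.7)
The plan is to derive the lemma as an essentially immediate consequence of Remark~\ref{set}. No specific appeal to the lexsegment structure of $I$ is needed beyond what that remark already encodes; the work has been done, in effect, by packaging the defining property of linear quotients into a syntactic statement about $x_s m$.

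First I would unpack the hypothesis $s \in \set(m)$ via Remark~\ref{set}. It yields a monomial $w \in G(I^k)$ with $w <_{revlex} m$ and an index $t$, $1 \le t \le n$, such that $x_s m = x_t w$. The same remark also records the three consequences to be used below: $s \ne t$, $x_t \mid m$, and, crucially, $s > t$ (the strict inequality coming from the fact that $w = x_s m / x_t$ differs from $m$ only in positions $s$ and $t$, and $<_{revlex}$ forces the exponent that decreased to sit in the larger position).

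The key step is then the short observation that $x_t \mid m$ forces $t \in \supp(m)$, hence $t \ge \min(\supp(m)) = \min(m)$. Chaining this with the inequality $s > t$ from the remark gives $s > t \ge \min(m)$, which is the desired conclusion.

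I do not expect a real obstacle: once Remark~\ref{set} is available, the lemma reduces to combining two inequalities. The only point requiring minor care is remembering that $\min(m)$ is defined in Section~1 as $\min(\supp(m))$, rather than as, say, the minimum of the nonzero exponents of $m$; with that convention in mind the argument is self-contained, and this is really a warm-up lemma that will be used to pin down the shape of $\set(m)$ in the more substantial results that follow.
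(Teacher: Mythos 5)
Your proof is correct and is essentially identical to the paper's: both invoke Remark~\ref{set} to get $x_sm=x_tw$ with $s>t$ and $x_t\mid m$, then conclude $s>t\geq\min(m)$. No issues.
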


\begin{proof} Since $s\in\set(m)$, by using the above remark, we have that $x_sm=wx_t$, for some $w\in G(I^k)$, $w<_{revlex}m$, and some $t$, $1\leq t\leq n$. Moreover, $s>t$. The statement follows, since $x_t\mid m$ implies that $t\geq\min(m)$.
\end{proof}

One may note that, once we fix an integer $l$, $2\leq l\leq n-1$, a monomial $m\in S$ may be uniquely written as $m= \overline m\tilde m$, with $ \overline{m}\in K[x_1,\ldots,x_l]$ and $\tilde{m}\in K[x_{l+1},\ldots,x_n]$. In particular, we have that $\max( \overline{m})\leq l<\min(\tilde{m})$. On the set of all the monomials of degree $kd$ in $S$, $\mathcal{M}_{kd}(S)$, we define the order $\prec$ as follows: for $m_1,m_2\in\mathcal{M}_{kd}(S)$, we say that $m_1\prec m_2$ if $\deg( \overline{m_1})<\deg( \overline{m_2})$ or $\deg( \overline{m_1})=\deg( \overline{m_2})$ and $m_1<_{lex}m_2$.

If $I=(\mathcal{L}(u,v))$, with $x_1\mid u$ and $x_1\nmid v$, is a lexsegment ideal with a linear resolution which is not a completely lexsegment, then $u=x_1x_{l+1}^{a_l+1}\cdots x_n^{a_n}$ and $v=x_lx_n^{d-1}$, for some integer $l$, $2\leq l\leq n-1$. Therefore, through this paper, we will assume that the fixed integer which will be used in the order $\prec$ is $l$.

\begin{Remark}\rm  If $m\in G(I^k)$, then $\deg( \overline{m})\geq k$, since $u=x_1x_{l+1}^{a_{l+1}}\cdots x_n^{a_n}\ \mbox{and } v=x_lx_{n}^{d-1}$, for some $l$, $2\leq l\leq n-1$.
\end{Remark}

\begin{Lemma}\label{setmin2} Let $I=(\mathcal{L}(u,v))\subset S$ be a lexsegment ideal with a linear resolution which is not a completely lexsegment ideal and $m\in G(I^k)$ a monomial. If $s\in\set(m)$ and $x_sm/x_{\min(m)}\prec v^k$, then $s>\min(\tilde{m})$.
\end{Lemma}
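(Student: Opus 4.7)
The plan is to exploit the fact that $v^k = x_l^k x_n^{k(d-1)}$, so $\deg(\overline{v^k})=k$, together with the Remark above that every generator $w\in G(I^k)$ has $\deg(\overline{w})\geq k$. First I would reduce the hypothesis $x_sm/x_{\min(m)} \prec v^k$ to the purely numerical condition $\deg(\overline{x_sm/x_{\min(m)}}) < k$: any monomial $w$ of degree $kd$ satisfying $\deg(\overline{w})=k$ and $w<_{lex}v^k$ would be forced to have $\nu_i(w)=0$ for $i<l$ and $\nu_l(w)<k$, which gives $\deg(\overline{w})<k$, a contradiction. So the lex tiebreaker in the definition of $\prec$ never triggers when the target is $v^k$.

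Next set $j=\min(m)$. Since $\deg(\overline{m})\geq k\geq 1$, we have $j\leq l$, and so $x_j$ divides $\overline{m}$. A brief case split on $s$ gives: if $s\leq l$, then $\overline{x_sm/x_j}=x_s\overline{m}/x_j$ has degree $\deg(\overline{m})\geq k$, contradicting the reduced hypothesis; hence $s>l$. With $s>l$ and $j\leq l$, the computation $\deg(\overline{x_sm/x_j})=\deg(\overline{m})-1$ together with the reduced hypothesis and $\deg(\overline{m})\geq k$ pins down $\deg(\overline{m})=k$ exactly.

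To finish I would assume for contradiction that $s\leq\min(\tilde{m})$. By Remark~\ref{set}, $s\in\set(m)$ produces some $t\in\supp(m)$ with $t<s$ and $w:=x_sm/x_t\in G(I^k)$. Since $\min(\tilde{m})\geq s>t$, the index $t$ cannot lie in $\supp(\tilde{m})$, so $t\leq l$. Then, using $s>l$ and $t\leq l$, one gets $\deg(\overline{w})=\deg(\overline{m})-1=k-1$, contradicting the Remark that every element of $G(I^k)$ has overline-degree at least $k$. Hence $s>\min(\tilde{m})$, as required. The main (and essentially only) obstacle is the initial reduction of the $\prec$-hypothesis to a single degree inequality; after that, the argument is just bookkeeping of which indices sit in $\{1,\ldots,l\}$ versus $\{l+1,\ldots,n\}$, closed off by the Remark on overline-degrees of generators of $I^k$.
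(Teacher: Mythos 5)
Your proof is correct and follows essentially the same route as the paper: reduce the hypothesis $x_sm/x_{\min(m)}\prec v^k$ to the degree inequality $\deg(\overline{x_sm/x_{\min(m)}})<k$ (ruling out the lex tie-break case), deduce $s>l$ and $\deg(\overline{m})=k$, and then use Remark~\ref{set} together with the bound $\deg(\overline{w})\geq k$ for $w\in G(I^k)$ to force $t\geq\min(\tilde{m})$ and hence $s>\min(\tilde{m})$. The only cosmetic difference is that you phrase the last step as a contradiction on $s$ while the paper argues directly on $t$; the content is identical.
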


\begin{proof}  By the hypothesis we have $x_sm/x_{\min(m)}\prec v^k$. Writing $m$ as $m= \overline m\tilde m$, we get that the only possible case is that when $\deg(\overline{x_sm/x_{\min(m)}})<\deg(\overline{v^k})=\deg(\overline{x_l^kx_n^{k(d-1)}})=k$. Indeed, if we assume that $\deg(\overline{x_sm/x_{\min(m)}})=\deg(\overline{v^k})=k$, then $x_sm/x_{\min(m)}<_{lex}v^k=x_l^kx_n^{k(d-1)}$. In particular, $x_sm/x_{\min(m)}\leq_{lex}x_l^{k-1}x_{l+1}^{k(d-1)+1}$, a contradiction. Therefore, we have that $\deg(\overline{x_sm/x_{\min(m)}})<k$ which implies that $\deg(\overline{m})=k$ and $s>l$. 

Since $s\in \set(m)$, according to Remark~\ref{set}, we have $x_sm=x_tw$, for some $w\in G(I^k)$, $w<_{revlex}m$, for some $t\in\{1,\ldots, n\}$ and $s>t$. One may note that, since $w\in G(I^k)$ and $x_t\mid m$, we must have $t\geq\min(\tilde{m})$ because otherwise we will get that $w=x_sm/x_t$ has $\deg(\overline{m})=k-1$, which is impossible. 
\end{proof}

In \cite{HT}, J. Herzog and Y. Takayama defined the decomposition function of a monomial ideal with linear quotients. We recall their definition. 

\begin{Definition}\cite{HT} \rm Let $I\subset S$ be a monomial ideal with linear quotients with respect to the sequence of minimal monomial generators $u_1,\ldots, u_m$ and set $I_j=(u_1,\ldots, u_j)$, for $j=1,\ldots,m$. Let $M(I)$ be the set of all monomials in $I$. The map $g:M(I)\rightarrow G(I)$ defined as: $g(u)=u_j$, where $j$ is the smallest number such that $u\in I_j$, is called \it the decomposition function \rm of $I$.
\end{Definition}

By using the above results, we may completely describe the decomposition function associated to the increasing reverse lexicographical order. Note that, since $I$ is a lexsegment ideal with a linear resolution which is not a completely lexsegment, then $I$ has linear quotients with respect to the increasing reverse lexicographical order. Moreover, $I^k$ has linear quotients, for all $k\geq 1$, by \cite[Corollary 3.9]{EO}.

\begin{Proposition}\label{dec1} Let $I=(\mathcal{L}(u,v))\subset S$ be a lexsegment ideal with a linear resolution which is not a completely lexsegment ideal and $g:M(I^k)\rightarrow G(I^k)$ the decomposition function with respect to the increasing reverse lexicographical order. If $m\in G(I^k)$ and $s\in\set(m)$ such that $x_sm/x_{\min(m)}\succeq v^k$, then $g(x_sm)=x_sm/x_{\min(m)}$.
\end{Proposition}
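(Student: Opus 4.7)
I will verify $g(x_s m) = w := x_s m / x_{\min(m)}$ by checking three conditions: $w \mid x_s m$ (immediate, since $x_s m = w \cdot x_{\min(m)}$), $w \in G(I^k)$, and that no strictly $<_{revlex}$-smaller element of $G(I^k)$ divides $x_s m$.

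For the minimality condition, any $w' \in G(I^k)$ dividing $x_s m$ has degree $kd$, so $w' = x_s m / x_q$ for some $q$ with $x_q \mid x_s m$; either $q = s$ (so $w' = m$), or $x_q \mid m$ and $q \ge \min(m)$. Lemma~\ref{setmin} gives $s > \min(m)$, so comparing $w$ and $m$ in revlex, the largest position where they differ is $s$, where $\nu_s(w) = \nu_s(m) + 1$, hence $w <_{revlex} m$. If $q > \min(m)$ and $q \ne s$, the largest position where $w$ and $w'$ differ is $q$, where $\nu_q(w) = \nu_q(w') + 1$, so $w <_{revlex} w'$. Thus, once $w \in G(I^k)$ is established, $w$ is forced to be the smallest such divisor and equals $g(x_s m)$.

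The main task is to exhibit a factorization $w = n_1 \cdots n_k$ with $n_i \in \mathcal{L}(u,v)$. Fix a factorization $m = m_1 \cdots m_k$ with $m_i \in \mathcal{L}(u,v)$ and $\min(m_1) = \min(m)$, and try the naive replacement $n_1 := x_s m_1 / x_{\min(m)}$, $n_i := m_i$ for $i \ge 2$. The bound $n_1 \le_{lex} u$ is immediate from $s > \min(m) = \min(m_1)$, so the condition $n_1 \in \mathcal{L}(u,v)$ reduces to $n_1 \ge_{lex} v = x_l x_n^{d-1}$. A direct case split shows this reduces further to $\deg(\overline{n_1}) \ge 1$: if $\deg(\overline{n_1}) \ge 2$ then $n_1 >_{lex} v$ automatically; if $\deg(\overline{n_1}) = 1$, then either its unique variable in $\overline{n_1}$ has index $< l$ (so $n_1 >_{lex} v$) or it equals $x_l$, in which case $\tilde{n_1}$ is a degree-$(d{-}1)$ monomial in $\{x_{l+1}, \ldots, x_n\}$ and hence $\ge_{lex} x_n^{d-1}$. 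Since $\deg(\overline{n_1}) = \deg(\overline{m_1})$ when $s \le l$ and $\deg(\overline{m_1}) - 1$ when $s > l$, the naive replacement succeeds except in the subcase $s > l$ together with $\deg(\overline{m_1}) = 1$.

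This remaining subcase, which I expect to be the main obstacle, is where the hypothesis $w \succeq v^k$ plays its essential role. By relabeling, I may assume $\deg(\overline{m_i}) = 1$ for every factor $m_i$ containing $x_{\min(m)}$ (otherwise such a factor could replace $m_1$ and we would be back in the easy case). Under $s > l$, $\deg(\overline{w}) = \deg(\overline{m}) - 1$, and $w \succeq v^k$ forces $\deg(\overline{w}) \ge k$; hence $\deg(\overline{m}) \ge k + 1$. Combined with $\deg(\overline{m_1}) = 1$, pigeonhole produces some $j \ge 2$ with $\deg(\overline{m_j}) \ge 2$; by the labeling assumption, $x_{\min(m)} \nmid m_j$, so $q := \min(m_j) > \min(m)$ and $q \le l$. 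Performing the double swap $n_1 := x_q m_1 / x_{\min(m)}$, $n_j := x_s m_j / x_q$, and leaving the remaining $n_i := m_i$ untouched, the product $n_1 \cdots n_k$ still equals $w$; and the same criterion as above, now requiring $\deg(\overline{n_1}) \ge 1$ (which holds since $\overline{n_1} = x_q$) and $\deg(\overline{n_j}) = \deg(\overline{m_j}) - 1 \ge 1$, places both factors in $\mathcal{L}(u,v)$. This yields $w \in G(I^k)$ and hence $g(x_s m) = w$.
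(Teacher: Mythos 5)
Your proof is correct and follows essentially the same route as the paper's: you establish $x_sm/x_{\min(m)}\in G(I^k)$ by a single-variable swap inside one factor of a factorization $m=m_1\cdots m_k$ when possible, and otherwise by a double swap through a factor $m_j$ with $\deg(\overline{m_j})\ge 2$ (whose existence follows from $\deg(\overline{m})\ge k+1$, which is exactly where the hypothesis $\succeq v^k$ enters), and then obtain minimality by noting that any generator dividing $x_sm$ has the form $x_sm/x_q$ with $q\ge\min(m)$. Your reduction to the single criterion $\deg(\overline{n_i})\ge 1$ merely streamlines the paper's two-case split on $\deg(\overline{x_sm/x_{\min(m)}})$.
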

\begin{proof} Let $m\in G(I^k)$ and $s\in\set(m)$. We have to show that $x_sm/x_{\min(m)}\in G(I^k)$ and 	\[\frac{x_{s}m}{x_{\min(m)}}=\min\ _{revlex}\{w\in G(I^k)\ :\ w<_{revlex}m,\ x_{s}m\in I^k_{\leq_{revlex}w}\}.
\]
 If $x_sm/x_{\min(m)}=v^k$, then it is obvious that $x_sm/x_{\min(m)}\in G(I^k)$. Let us assume that $x_sm/x_{\min(m)}\succ v^k$.  By Lemma~\ref{setmin}, we have that $s>\min(m)$. The fact that $x_sm/x_{\min(m)}\succ v^k$ implies one of the following $\deg(\overline{x_sm/x_{\min(m)}})>\deg(\overline{v^k})=k$ or $\deg(\overline{x_sm/x_{\min(m)}})=\deg(\overline{v^k})=k$ and $x_sm/x_{\min(m)}>_{lex}v^k$. 

In order to show that $x_sm/x_{\min(m)}\in G(I^k)$, we split the proof in two cases due to the discussions involved by $x_sm/x_{\min(m)}\succ v^k$:

\textit{Case I:} We assume that $\deg(\overline{x_sm/x_{\min(m)}})>\deg(\overline{v^k})=k$. Since $m\in G(I^k)$ there exist $m_1,\ldots, m_k\in\mathcal{L}(u,v)$, such that $m=m_1\cdots m_k$. Let $1\leq i\leq k$ be such that $\min(m)=\min(m_i)$. Then
	\[\frac{x_sm}{x_{\min(m)}}=x_s m_1\cdots m_{i-1}\frac{m_i}{x_{\min(m_i)}}m_{i+1}\cdots m_k\geq v^k.
\]
If $x_sm_i/x_{\min(m_i)}\in \Lc(u,v)$, then we are finished. We assume that $x_{s}m_i/x_{\min(m_i)}\notin\Lc(u,v)$, that is $x_sm_i/x_{\min(m_i)}<_{lex} v=x_lx_n^{d-1}$, since $s>\min(m_i)=\min(m)$. In particular, $\supp(x_sm_i/x_{\min(m_i)})\subseteq\{l+1,\ldots,n\}$ and $s\geq l+1$. Since $\deg(\overline{x_sm/x_{\min(m)}})>k$, there exist $1\leq j,r\leq l$ and $1\leq \alpha\leq k$ such that $x_jx_r\mid m_{\alpha}$. In particular, we must have $j,r\geq2$ by using the form of the monomials $u$ and $v$. Then
 \[\frac{x_sm}{x_{\min(m)}}=m_1\cdots \frac{x_sm_{\alpha}}{x_j}\cdots \frac{x_jm_i}{x_{\min(m_i)}}\cdots m_k\geq_{lex} v^k
\]
where $v\leq_{lex}x_jm_i/x_{\min(m_i)}\leq_{lex} m_i\leq_{lex} u$ and $v\leq_{lex}x_sm_{\alpha}/x_{j}<_{lex} m_{\alpha}\leq_{lex} u$. This implies $x_sm/x_{\min(m)}\in G(I^k)$.

\textit{Case II:} We assume that $\deg(\overline{x_sm/x_{\min(m)}})=\deg(\overline{v^k})=k$, therefore we must have $x_sm/x_{\min(m)}>_{lex}v^k$. Since $\deg(\overline{x_sm/x_{\min(m)}})=k$, we can have that $s\leq l$ or $s>l$ and $\deg(\overline{m})=k+1$.

Since $m\in G(I^k)$ there exist $m_1,\ldots, m_k\in\mathcal{L}(u,v)$, such that $m=m_1\cdots m_k$. Let $1\leq i\leq n$ be such that $\min(m)=\min(m_i)$.

If $s\leq l$, then, since $m=m_1\cdots m_k$ and using above the notations, we get

	\[\frac{x_sm}{x_{\min(m)}}= m_1\cdots m_{i-1}\frac{x_sm_i}{x_{\min(m_i)}}m_{i+1}\cdots m_k\geq v^k\in G(I^k)
\]
because $\min(m)=\min(m_i)<s\leq l$.

If $s>l$, then $\deg(\overline{m})=k+1$ and, as in the Case I, there exist $1\leq j,r\leq l$ and $1\leq \alpha\leq k$ such that $x_jx_r\mid m_{\alpha}$. In particular, we must have $j,r\geq2$ by using the form of the monomials $u$ and $v$. Then
 \[\frac{x_sm}{x_{\min(m)}}=m_1\cdots \frac{x_sm_{\alpha}}{x_j}\cdots \frac{x_jm_i}{x_{\min(m_i)}}\cdots m_k\geq_{lex} v^k
\]
where $v\leq_{lex}x_jm_i/x_{\min(m_i)}\leq_{lex} m_i\leq_{lex} u$ and $v\leq_{lex}x_sm_{\alpha}/x_{j}<_{lex} m_{\alpha}\leq_{lex} u$. This implies $x_sm/x_{\min(m)}\in G(I^k)$.

Therefore, we proved that $x_sm/x_{\min(m)}\in G(I^k)$. 

We have to prove that
	\[\frac{x_{s}m}{x_{\min(m)}}=\min\ _{revlex}\{w\in G(I^k)\ :\ w<_{revlex}m,\ x_{s}m\in I^k_{\leq_{revlex}w}\}.
\]
Let $w\in G(I^k)$ be such that $w<_{revlex} m$ and $x_sm\in I^k_{\leq_{revlex}w}$.
Since $x_sm\in I^k_{\leq_{revlex}w}$, there exists $w_1\in G(I^k)$, $w_1\leq_{revlex}w$, such that $x_sm=x_{t}w_1$, for some $t$, $1\leq t\leq n$. The fact that $m\neq w_1$ implies $s\neq t$. Hence, we must have $x_{t}\mid m$, in particular $t\geq\min(m)$. Therefore $$w\geq_{revlex}w_1=\frac{x_sm}{x_{t}}\geq_{revlex}\frac{x_sm}{x_{\min(m)}}$$ as desired.
\end{proof}

\begin{Proposition}\label{dec2} Let $I=(\mathcal{L}(u,v))\subset S$ be a lexsegment ideal with a linear resolution which is not a completely lexsegment ideal and $g:M(I^k)\rightarrow G(I^k)$ the decomposition function with respect to the increasing reverse lexicographical order. If $m\in G(I^k)$ and $s\in\set(m)$ such that $x_sm/x_{\min(m)}\prec v^k$, then $g(x_sm)=x_sm/x_{\min(\tilde{m})}$.
\end{Proposition}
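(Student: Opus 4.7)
My plan is to follow the template of the proof of Proposition~\ref{dec1}: exhibit $n := x_s m / x_{\min(\tilde{m})}$ as a member of $G(I^k)$, then verify $n <_{revlex} m$, and finally check revlex-minimality. Before starting I extract three facts from Lemma~\ref{setmin2} and the computation inside its proof: $s > \min(\tilde{m})$, $\deg(\overline{m}) = k$, and $s > l$.

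For membership in $G(I^k)$, write $m = m_1 \cdots m_k$ with $m_i \in \mathcal{L}(u,v)$. Every generator of $\mathcal{L}(u,v)$ has $\min \leq l$, so $\deg(\overline{m_i}) \geq 1$; combined with $\deg(\overline{m}) = k$ this forces $\deg(\overline{m_i}) = 1$ for every $i$, whence $m_i = x_{j_i} \tilde{m_i}$ with $j_i \leq l$ and $\tilde{m_i} \in K[x_{l+1}, \ldots, x_n]$. Choose $i_0$ with $\min(\tilde{m_{i_0}}) = \min(\tilde{m})$ and replace $m_{i_0}$ by $m_{i_0}' := x_s m_{i_0} / x_{\min(\tilde{m_{i_0}})}$. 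The resulting product equals $n$; provided $m_{i_0}' \in \mathcal{L}(u,v)$, $n$ is a product of $k$ generators of $I$, so it lies in $G(I^k)$.

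Setting $t_0 := \min(\tilde{m})$, the monomials $n$ and $m$ differ only at positions $s$ and $t_0$; since $s > t_0$ is the rightmost differing position and $\nu_s(n) > \nu_s(m)$, we obtain $n <_{revlex} m$ directly from the definition. For minimality, suppose $w \in G(I^k)$ satisfies $w <_{revlex} m$ and $x_s m \in I^k_{\leq_{revlex} w}$. As in Proposition~\ref{dec1}, there exists $w_1 \in G(I^k)$ with $w_1 \leq_{revlex} w$ and $w_1 = x_s m / x_t$ for some $t \neq s$ with $x_t \mid m$. Because $s > l$ and $\deg(\overline{w_1}) \geq k = \deg(\overline{m})$, the case $t \leq l$ is impossible, so $t \geq l+1$, and $x_t \mid m$ then forces $t \geq t_0$. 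A short revlex comparison yields $w_1 \geq_{revlex} n$: equality when $t = t_0$, otherwise $t$ is the rightmost position where $w_1$ and $n$ differ, with $\nu_t(n) > \nu_t(w_1)$. Hence $w \geq_{revlex} n$, as needed.

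The main obstacle is verifying $m_{i_0}' \in \mathcal{L}(u,v)$ in the first step. That $m_{i_0}' \leq_{lex} m_{i_0} \leq_{lex} u$ is immediate since $s > \min(\tilde{m_{i_0}})$. The delicate inequality is $m_{i_0}' \geq_{lex} v$: when $j_{i_0} < l$ it follows by comparing leading variables, but when $j_{i_0} = l$ one must invoke the special structure of $v = x_l x_n^{d-1}$ --- namely that $x_n^{d-1}$ is the lex-smallest monomial of degree $d-1$ in $K[x_{l+1}, \ldots, x_n]$ --- so that any replacement within $K[x_{l+1}, \ldots, x_n]$ still gives $m_{i_0}' \geq_{lex} v$.
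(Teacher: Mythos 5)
Your proposal is correct and follows essentially the same route as the paper's proof: membership of $x_sm/x_{\min(\tilde{m})}$ in $G(I^k)$ is obtained by modifying the single factor $m_{i}$ of $m=m_1\cdots m_k$ containing $x_{\min(\tilde{m})}$ (using $s>\min(\tilde{m})>l$ so the modified factor stays in $\mathcal{L}(u,v)$), and minimality is obtained by showing that any competing $w_1=x_sm/x_t$ must have $t\geq\min(\tilde{m})$ because $\deg(\overline{w_1})\geq k=\deg(\overline{m})$ rules out $t\leq l$. Your added details (that each $\deg(\overline{m_i})=1$, and the explicit lex comparisons with $u$ and $v$) simply flesh out steps the paper treats tersely.
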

\begin{proof} One may easy see that we may only have $\deg(\overline{x_sm/x_{\min(m)}})<k$ which implies that $\deg(\overline{m})=k$. Indeed, if $\deg(\overline{x_sm/x_{\min(m)}})=k$, then $x_sm/x_{\min(m)}<_{lex} v^k$, that is $x_sm/x_{\min(m)}\leq_{lex} x_l^{k-1}x_{l+1}^{k(d-1)+1}$ which is impossible since each monomial $w$ of this form has $\deg(\overline{w})<k$. By Lemma~\ref{setmin2}, we have $s>\min(\tilde{m})>l$. 

Firstly, we prove that $x_sm/x_{\min(\tilde{m})}\in G(I^k)$. Since $m\in G(I^k)$ there exist $m_1,\ldots, m_k\in\mathcal{L}(u,v)$, such that $m=m_1\cdots m_k$.  Let $1\leq i\leq k$ be such that $x_{\min(\tilde{m})}\mid m_i$. Then
	\[\frac{x_sm}{x_{\min(\tilde{m})}}=m_1\cdots m_{i-1}\frac{x_sm_i}{x_{\min(m'_i)}}\cdots m_k\in G(I^k)
\]
since $x_sm_i/x_{\min(m'_i)}\in\Lc(u,v)$ because $s>\min(\tilde{m})\geq l+1$.

Next, we have to prove that
	\[\frac{x_{s}m}{x_{\min(\tilde{m})}}=\min\ _{revlex}\{w\in G(I^k)\ :\ w<_{revlex}m,\ x_{s}m\in I^k_{\leq_{revlex}w}\}.
\]
Let $w\in G(I^k)$ be such that $w<_{revlex} m$ and $x_sm\in I^k_{\leq_{revlex}w}$ which implies that there exists $w_1\in G(I^k)$, $w_1\leq_{revlex}w$ such that $x_sm=x_{t}w_1$, for some $t$, $1\leq t\leq n$. The fact that $m\neq w_1$ implies $s\neq t$. Hence, we must have $x_{t}\mid m$, in particular $t\geq\min(m)$. Since $\deg(\overline{m})=k$, $s>\min(\tilde{m})>l$, and $w\in G(I^k)$, we must have that $\deg(\overline{w})=k$ which implies that $t\geq\min(\tilde{m})$, since $x_t\mid m$. Therefore $w_1=x_sm/x_t\geq_{revlex}x_sm/x_{\min(\tilde{m})}$, which ends the proof.
\end{proof}

Propositions~\ref{dec1} and \ref{dec2} give us a complete description of the decomposition function.

\begin{Corollary}\label{dec} Let $I=(\mathcal{L}(u,v))\subset S$ be a lexsegment ideal with a linear resolution which is not a completely lexsegment ideal and $g:M(I^k)\rightarrow G(I^k)$ the decomposition function with respect to the increasing reverse lexicographical order. Then
	\[g(x_sm)=
\left\{\begin{array}{cc}
x_sm/x_{\min(m)},&x_sm/x_{\min(m)}\succeq v^k\\
x_sm/x_{\min(\tilde{m})},&x_sm/x_{\min(m)}\prec v^k.
\end{array}\right.
\]
for any $m\in G(I^k)$, $s\in\set(m)$, and $m= \overline m\tilde m$.
\end{Corollary}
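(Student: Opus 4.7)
The plan is to observe that this corollary is nothing more than a direct consolidation of the two immediately preceding propositions. Given any $m\in G(I^k)$ and any $s\in\set(m)$, the monomial $x_sm/x_{\min(m)}$ has degree $kd$ and therefore lies in $\mathcal{M}_{kd}(S)$, so its comparison with $v^k$ under the order $\prec$ falls into one of two mutually exclusive and exhaustive cases: either $x_sm/x_{\min(m)}\succeq v^k$ or $x_sm/x_{\min(m)}\prec v^k$. This dichotomy is the only case split needed.

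In the first case, Proposition~\ref{dec1} gives directly $g(x_sm)=x_sm/x_{\min(m)}$, which matches the top branch of the piecewise formula in the corollary. In the second case, Proposition~\ref{dec2} gives directly $g(x_sm)=x_sm/x_{\min(\tilde m)}$, matching the bottom branch. Putting the two assertions together yields the claimed description of $g$.

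Strictly speaking, the only point one needs to justify is the completeness of the case split, namely that $\prec$ is a total order on monomials of fixed degree $kd$ in $S$ so that the comparison $x_sm/x_{\min(m)}$ versus $v^k$ really does fall into exactly one of $\succeq$ or $\prec$. This is immediate from the definition of $\prec$: two monomials of degree $kd$ are compared first by the degree of their $\overline{(\,\cdot\,)}$ parts and then by $<_{lex}$, each of which is a total comparison. Consequently, there is no genuine obstacle here, and the corollary is essentially a matter of bookkeeping: the two propositions together already provide the complete decomposition function, and the corollary is the clean piecewise statement of that combined result.
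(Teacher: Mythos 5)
Your proposal is correct and matches the paper exactly: the paper states the corollary as an immediate consequence of Propositions~\ref{dec1} and \ref{dec2}, with no further argument beyond the observation that the two cases $x_sm/x_{\min(m)}\succeq v^k$ and $x_sm/x_{\min(m)}\prec v^k$ are exhaustive and mutually exclusive. Your extra remark on the totality of $\prec$ is a harmless (and reasonable) piece of bookkeeping that the paper leaves implicit.
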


Let $I$ be a monomial ideal with linear quotients. We say that the decomposition function $g:M(I)\rightarrow G(I)$ associated to the corresponding order of monomials is \it regular \rm if $\set(g(x_su))$$\subseteq\set(u)$ for all $s\in\set(u)$ and $u\in G(I)$. In the sequel, we show that, for the powers of lexsegment ideals $I$ with a linear resolution which are not completely lexsegment, the decomposition function $g:M(I^k)\rightarrow G(I^k)$ associated to the increasing reverse lexicographical order of the generators from $G(I^k)$ is regular.

\begin{Proposition}\label{reg1} Let $I=(\mathcal{L}(u,v))\subset S$ be a lexsegment ideal with a linear resolution which is not a completely lexsegment ideal and $g:M(I^k)\rightarrow G(I^k)$ the decomposition function with respect to the increasing reverse lexicographical order. Let $m\in G(I^k)$ and $s\in\set(m)$ be such that $x_sm/x_{\min(m)}\succ v^k$ and let $t\in\set(g(x_sm))$. Then $t\in\set(m)$.
\end{Proposition}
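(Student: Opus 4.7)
The strategy is to exhibit a witness $w\in G(I^k)$ with $w<_{revlex}m$ and $w\mid x_tm$; by Remark~\ref{set} this gives $t\in\set(m)$ directly. The two natural candidates are $w_1:=x_tm/x_\mu$ (where $\mu=\min(m)$) and $w_2:=x_tm/x_{\tilde\mu}$ (where $\tilde\mu=\min(\tilde m)$). Before trying either, I would apply Lemma~\ref{setmin} to the pair $(m',t)$ and combine it with $\min(m')\ge\mu$ to conclude $t>\mu$; this already gives $w_1<_{revlex}m$ by comparing exponent vectors at index $t$. The whole task then reduces to showing that either $w_1$ or $w_2$ lies in $G(I^k)$, which I would do by modifying a fixed factorisation $m=m_1\cdots m_k$ with $m_i\in\mathcal{L}(u,v)$.

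I would then split into three cases, following the pattern of the proof of Proposition~\ref{dec1}. In Case~1, when $t\le l$, picking any $i$ with $x_\mu\mid m_i$, the monomial $x_tm_i/x_\mu$ has leftmost index at most $l$ (because $t\le l$) and satisfies $x_tm_i/x_\mu<_{lex}m_i\le_{lex}u$ (because $t>\mu$), so it lies in $\mathcal{L}(u,v)$, and $w_1\in G(I^k)$.

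In Case~2, when $t>l$ and $\deg(\overline m)\ge k+1$, some $m_\alpha$ has $\deg(\overline{m_\alpha})\ge2$. If some $m_i$ with $x_\mu\mid m_i$ also has $\deg(\overline{m_i})\ge2$, the argument of Case~1 applies verbatim. Otherwise every $m_i$ with $x_\mu\mid m_i$ has the form $x_\mu a_i$ with $a_i\in K[x_{l+1},\ldots,x_n]$, so $\alpha\ne i$, and I carry out the swap
\[
w_1=(x_jm_i/x_\mu)(x_tm_\alpha/x_j)\prod_{j'\ne i,\alpha}m_{j'},
\]
where $j\le l$ is chosen with $x_j\mid m_\alpha$. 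The crucial observation is that $\nu_1(m_\alpha)\le1$, and $\nu_1(m_\alpha)=1$ would force $\deg(\overline{m_\alpha})=1$ by the description of elements of $\mathcal{L}(u,v)$ with $\nu_1=1$; hence $\nu_1(m_\alpha)=0$ and one may pick $j\ge2$. This gives $x_jm_i/x_\mu=x_ja_i<_{lex}u$, and the other factor $x_tm_\alpha/x_j$ still has a variable of index $\le l$ (since $\deg(\overline{m_\alpha})\ge2$) and satisfies $t>j$, so it too lies in $\mathcal{L}(u,v)$.

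In Case~3, when $t>l$ and $\deg(\overline m)=k$, $w_1$ cannot lie in $G(I^k)$ for degree reasons, and the strict hypothesis $x_sm/x_\mu\succ v^k$ becomes essential: it forces $s\le l$, because $s>l$ would yield $\deg(\overline{x_sm/x_\mu})=k-1<k=\deg(\overline{v^k})$. So $\tilde m'=\tilde m$ and each $m_i$ has the form $x_{j_i}a_i$ with $j_i\le l$ and $a_i\in K[x_{l+1},\ldots,x_n]$. I switch to $w_2$: applying Corollary~\ref{dec} to $(m',t)$, the inequality $\deg(\overline{x_tm'/x_{\min(m')}})=k-1<k$ places us in the second branch, so $g(x_tm')=x_tm'/x_{\min(\tilde m')}$ and Remark~\ref{set} gives $t>\min(\tilde m')=\tilde\mu$. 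Choosing $i$ with $x_{\tilde\mu}\mid a_i$, the modified factor $x_{j_i}(x_ta_i/x_{\tilde\mu})$ lies in $\mathcal{L}(u,v)$ (leftmost $j_i\le l$; upper bound from $t>\tilde\mu$), so $w_2\in G(I^k)$. The main obstacle I expect is exactly this third case: once $w_1$ is ruled out by degree, one must identify $w_2$ as the correct replacement and extract $t>\tilde\mu$ from the second branch of Corollary~\ref{dec}, which is where the strictness of $\succ v^k$ (as opposed to $\succeq v^k$) plays its role.
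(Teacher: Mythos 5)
Your proof is correct, but it is organized around a different witness than the paper's. The paper extracts from $t\in\set(g(x_sm))$ the full relation $x_t\,g(x_sm)=x_jw$ with $w\in G(I^k)$, $w<_{revlex}g(x_sm)$, and then certifies $t\in\set(m)$ by the monomial $x_tm/x_j=x_{\min(m)}w/x_s$; the case analysis is on $j=s$ versus $j\neq s$ and then on $\deg(\overline{m})>k$ versus $\deg(\overline{m})=k$, with the problematic subcase ($\deg(\overline{x_tm/x_j})=k-1$) killed by a degree count on $\overline{w}$. You instead use only the two inequalities $t>\min(m)$ (Lemma~\ref{setmin}) and, when $\deg(\overline{m})=k$ and $t>l$, $t>\min(\tilde{m})$ (Lemma~\ref{setmin2}, which you reach via the second branch of Corollary~\ref{dec} applied to $g(x_sm)$), and you exhibit the canonical witnesses $x_tm/x_{\min(m)}$ or $x_tm/x_{\min(\tilde{m})}$ directly, verifying membership in $G(I^k)$ by the same factor-swapping trick the paper uses elsewhere. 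Your route buys independence from the auxiliary pair $(j,w)$ and makes the parallel with Propositions~\ref{dec1} and~\ref{dec2} explicit (the witness for $t$ is literally $g(x_tm)$ in each regime); the paper's route avoids your three-way split on $t\lessgtr l$ and the explicit treatment of the $\nu_1$ constraint. Two small points to tighten: the reduction ``a generator $w<_{revlex}m$ dividing $x_tm$ gives $t\in\set(m)$'' is the definition of $\set(m)$ rather than Remark~\ref{set} (which is the converse direction), and in Case~2 the Case~1 argument is not quite verbatim -- the reason $x_tm_i/x_{\min(m)}\geq_{lex}v$ changes from ``$t\le l$'' to ``a second variable of index $\le l$ survives in $m_i$'' -- though you have set up exactly the hypothesis ($\deg(\overline{m_i})\ge 2$) needed for that.
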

\begin{proof} By Lemma~\ref{setmin}, we have that $s>\min(m)$.
By the hypothesis, $x_sm/x_{\min(m)}\succ v^k$, therefore, by Proposition~\ref{dec}, we have $g(x_sm)=x_sm/x_{\min(m)}=w_1$. Since $t\in\set(w_1)$, we get $x_tw_1\in I^k_{<_{revlex}w_1}$. Hence there exist $w\in G(I^k)$, $w<_{revlex}w_1$, and $1\leq j\leq n$ such that $x_tw_1=x_jw$, that is
	\[x_tx_sm=x_jx_{\min(m)}w.
\]
One may note that $j\neq t$ (otherwise $w=w_1$, contradiction), hence $x_j\mid x_sm$. Since $t\in\set(w_1)$ and using Lemma~\ref{setmin} we obtain that $t>\min(w_1)\geq\min(m)$. 

If $j=s$, then $x_tm=x_{\min(m)}w$ and $t\in\set(m)$. 

Let us assume that $j\neq s$. We show that $x_{\min(m)}w/x_s\in G(I^k)$. We write $m=m_1\cdots m_k$, with $m_1,\ldots,m_k\in\Lc(u,v)$. Let $1\leq i\leq k$ be such that $x_j\mid m_i$. Now, the fact that $w<_{revlex}w_1$ implies that $x_{\min(m)}w<_{revlex}x_{\min(m)}w_1=x_sm$. Therefore $x_{\min(m)}w/x_s<_{revlex}m$ and, taking into account that $x_{\min(m)}w/x_s=x_tm/x_j$, we get $x_tm/x_j<_{revlex}m$, that is $t>j$. 

If $\deg(\overline{m})>k$ then there exist $1\leq p\leq k$ and $1\leq \alpha\leq l$ such that $\deg(\overline{m_p})\geq2$ and $x_{\alpha}\mid m_p$. In particular, we must have $\alpha\geq 2$ (by using the form of the monomials $u$ and $v$). In this case
	\[\frac{x_tm}{x_j}=m_1\cdots \frac{x_{\alpha}m_i}{x_j}\cdots\frac{x_tm_p}{x_\alpha}\cdots m_k.
\] 
which implies $x_{t}m/x_{j}\in G(I^k)$ since $x_{\alpha}m_i/x_j$ and $x_tm_p/x_{\alpha}$ belong to $\Lc(u,v)$.

Let us assume that $\deg(\overline{m})=k$. Since $j<t$, we get $x_{t}m/x_{j}<_{lex}m$. If $\deg(\overline{x_tm/x_j})=k$, then we obviously have $x_tm/x_j\in G(I^k)$. We assume that $\deg(\overline{x_tm/x_j})=k-1$, that is $j\leq l$ and $t>l$. We also have $\min(m)\leq l$. Hence, $\deg(\overline{m})=k$ and the equality $x_tx_sm=x_jx_{\min(m)}w$ imply
	\[k\leq\deg(\overline{w})=\deg(\overline{m})+\nu_1(x_tx_s)+\cdots+\nu_l(x_tx_s)-2,
\]
which yields $\nu_1(x_tx_s)+\cdots+\nu_l(x_tx_s)=2$, that is $t,s\leq l$, a contradiction.

We proved that $x_{\min(m)}w/x_s<_{revlex}m$ and $x_{\min(m)}w/x_s\in G(I^k)$, hence $t\in\set(m)$.
\end{proof}

\begin{Proposition}\label{reg2} Let $I=(\mathcal{L}(u,v))\subset S$ be a lexsegment ideal with a linear resolution which is not a completely lexsegment ideal and $g:M(I^k)\rightarrow G(I^k)$ the decomposition function with respect to the increasing reverse lexicographical order. Let $m\in G(I^k)$ and $s\in\set(m)$ be such that $x_sm/x_{\min(m)}\prec v^k$ and let $t\in\set(g(x_sm))$. Then $t\in\set(m)$.
\end{Proposition}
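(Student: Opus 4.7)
The plan is to mirror the structure of Proposition~\ref{reg1}, splitting on how the witness to $t \in \set(g(x_s m))$ interacts with the multiplication by $x_s$. The hypothesis $x_s m/x_{\min(m)} \prec v^k$ places us in the second branch of Corollary~\ref{dec}, so $g(x_s m) = x_s m/x_{\min(\tilde m)} =: w_1$; moreover, the proof of Proposition~\ref{dec2} gives $\deg(\overline m) = k$ and Lemma~\ref{setmin2} gives $s > \min(\tilde m) > l$. Since $t \in \set(w_1)$, Remark~\ref{set} supplies $w \in G(I^k)$ with $w <_{revlex} w_1$ and some $j \in \{1,\dots,n\}$ with $x_t w_1 = x_j w$ and $j < t$. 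Substituting yields the key identity
\[
x_t x_s m \,=\, x_j\, x_{\min(\tilde m)}\, w,
\]
from which $x_j \mid x_s m$, and the proof splits according to whether $j = s$ or $x_j \mid m$.

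If $j = s$, solving for $w$ gives $w = x_t m/x_{\min(\tilde m)}$. Combining $t > s$ (from $j < t$) with $s > \min(\tilde m)$ forces $t > \min(\tilde m)$, hence $w <_{revlex} m$. Then $x_t m = x_{\min(\tilde m)} w$ with $w \in G(I^k)$ and $w <_{revlex} m$ exhibits $t \in \set(m)$.

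If $j \neq s$ then $x_j \mid m$, and I would take $m' := x_t m/x_j$ as the direct witness via $x_t m = x_j m'$. The inequality $m' <_{revlex} m$ is immediate from $j < t$; the content of this case is showing $m' \in G(I^k)$. Computing degrees on the $\{x_1, \dots, x_l\}$-component,
\[
\deg(\overline w) \,=\, \deg(\overline m) + [t \leq l] + [s \leq l] - [j \leq l] - [\min(\tilde m) \leq l] \,=\, k + [t \leq l] - [j \leq l],
\]
using $s, \min(\tilde m) > l$. Since $w \in G(I^k)$ forces $\deg(\overline w) \geq k$ while $j < t$ rules out $t \leq l < j$, we conclude $[t \leq l] = [j \leq l]$: either $j, t \leq l$ or $j, t > l$. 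Writing $m = m_1 \cdots m_k$ with $m_\alpha \in \mathcal{L}(u,v)$, the inequality $\deg(\overline{m_\alpha}) \geq 1$ (otherwise $m_\alpha \in K[x_{l+1}, \dots, x_n]$ and then $m_\alpha <_{lex} v$) combined with $\sum_\alpha \deg(\overline{m_\alpha}) = \deg(\overline m) = k$ forces $\deg(\overline{m_\alpha}) = 1$ for every $\alpha$, so each $\overline{m_\alpha} = x_{i_\alpha}$ with $i_\alpha \leq l$. Choosing $p$ with $x_j \mid m_p$, a direct inspection using $u = x_1 x_{l+1}^{a_{l+1}} \cdots x_n^{a_n}$ and $v = x_l x_n^{d-1}$ confirms $x_t m_p/x_j \in \mathcal{L}(u,v)$ in both subcases: the bound $\leq_{lex} u$ is immediate from $t > j$, and the bound $\geq_{lex} v$ holds because $x_t m_p/x_j$ again has exactly one variable from $\{x_1, \dots, x_l\}$ (of index $\leq l$) and a tail in $K[x_{l+1}, \dots, x_n]$ of degree $d-1$. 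Consequently $m' = m_1 \cdots (x_t m_p/x_j) \cdots m_k \in G(I^k)$.

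The hard part is precisely the last verification that $x_t m_p/x_j$ stays inside $\mathcal{L}(u,v)$ after the substitution: it rests on the structural fact $\deg(\overline{m_\alpha}) = 1$ for every factor and on the explicit shapes of $u$ and $v$. The degree identity isolating the clean subcases $j, t \leq l$ and $j, t > l$ is the book-keeping step that rules out the mismatched situation which, in Proposition~\ref{reg1}, required a separate contradiction argument from the hypothesis $s > l$ available here as well.
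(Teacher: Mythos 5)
Your proof is correct, and its skeleton coincides with the paper's: both derive the identity $x_tx_sm=x_jx_{\min(\tilde m)}w$ from $t\in\set(w_1)$ via Remark~\ref{set}, both dispose of the case $j=s$ immediately, and both reduce the case $j\neq s$ to showing that $x_tm/x_j=x_{\min(\tilde m)}w/x_s$ lies in $G(I^k)$. Where you diverge is in how that last membership is verified. The paper reasons on the factorization of $w$: since $s,\min(\tilde m)>l$, swapping $x_s$ for $x_{\min(\tilde m)}$ inside a factor of $w$ stays in $\mathcal{L}(u,v)$ ``by the form of $u$ and $v$,'' with the exceptional case $\overline w=x_1^k$ handled separately by showing it forces $j>l$. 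You instead reason on the factorization of $m$: the hypothesis $x_sm/x_{\min(m)}\prec v^k$ forces $\deg(\overline m)=k$, hence every factor $m_\alpha$ satisfies $\deg(\overline{m_\alpha})=1$, and the degree bookkeeping $\deg(\overline w)=k+[t\leq l]-[j\leq l]\geq k$ combined with $j<t$ pins $j$ and $t$ to the same side of $l$, after which $x_tm_p/x_j\in\mathcal{L}(u,v)$ follows from $x_tm_p/x_j<_{lex}m_p\leq_{lex}u$ and the ``one head variable plus degree-$(d-1)$ tail'' shape. Your route is more explicit at precisely the point where the paper is terse (``one may easy note that\dots''), and it makes the exceptional configuration that the paper isolates as $\overline w=x_1^k$ disappear into the clean dichotomy $j,t\leq l$ versus $j,t>l$; the paper's version is shorter but leans harder on the reader to check the swap inside $w$.
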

\begin{proof} In the case when $x_sm/x_{\min(m)}\neq v^k$, by Proposition~\ref{dec} we have $g(x_sm)=x_sm/x_{\min(\tilde{m})}=w_1$. Since $t\in\set(w_1)$, we get $x_tw_1\in I^k_{<_{revlex}w_1}$. Hence, by the Remark~\ref{set}, $x_tw_1=x_jw$, for some  $w\in G(I^k)$, $w<_{revlex}w_1$, and $t>j\geq \min(w_1)$. Therefore, we get that
	\[x_tx_sm=x_jx_{\min(\tilde{m})}w.
\]
Also, one may note that the only possible case is that in which $\deg(\overline{x_sm/x_{\min(m)}})<k$. By Lemma~\ref{setmin2}, we have $s>\min(\tilde{m})$. It is easily seen that $j\neq t$ (otherwise $w=w_1$, contradiction). 

If $j=s$, then $x_tm=x_{\min(\tilde{m})}w$ and $t\in\set(m)$.

We assume now that $j\neq s$. Since $s>\min(\tilde{m})>l$ and $w\in G(I^k)$, one may easy note that $x_{\min(\tilde{m})}w/x_s=x_tm/x_j\in G(I^k)$ by the form of the monomials $u$ and $v$, excepting the case when $\overline{w}=x_1^k$. But in this case, $x_1^k\mid w_1$, therefore $x_1^k\mid m$. Thus we must have that $j>l$, which implies that $x_tm/x_j\in G(I^k)$, therefore $x_{\min(\tilde{m})}w/x_s\in G(I^k)$. Moreover, $x_{\min(\tilde{m})}w/x_s=x_tm/x_j<_{revlex}m$, hence $t\in\set(m)$.
\end{proof}

\begin{Proposition}\label{reg3} Let $I=(\mathcal{L}(u,v))\subset S$ be a lexsegment ideal with a linear resolution which is not a completely lexsegment ideal and $g:M(I^k)\rightarrow G(I^k)$ the decomposition function with respect to the increasing reverse lexicographical order. Let $m\in G(I^k)$ and $s\in\set(m)$ be such that $x_sm/x_{\min(m)}=v^k$ and let $t\in\set(g(x_sm))$. Then $t\in\set(m)$.
\end{Proposition}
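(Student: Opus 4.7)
The plan is to argue that the conclusion of Proposition~\ref{reg3} holds vacuously, because $\set(v^k)$ is empty. Under the hypothesis $x_sm/x_{\min(m)}=v^k$, the equality falls in the first branch of Corollary~\ref{dec} (since $v^k\succeq v^k$), so $g(x_sm)=x_sm/x_{\min(m)}=v^k$. The entire proposition therefore reduces to verifying that $v^k$ is the minimum element of $G(I^k)$ in the increasing reverse lexicographical order; once this is established, there is no $t\in\set(v^k)$ to check.

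To prove the minimality of $v^k$, I would take an arbitrary $w\in G(I^k)$, write $w=w_1\cdots w_k$ with $w_i\in\mathcal{L}(u,v)$, and compare its exponent vector with $v^k=x_l^kx_n^{k(d-1)}$ from the largest index downward. The key observation is that every $w_i\geq_{lex}v=x_lx_n^{d-1}$ must contain a variable of index at most $l$ in its support, which forces $\nu_n(w_i)\leq d-1$ and hence $\nu_n(w)\leq k(d-1)=\nu_n(v^k)$. If this inequality is strict, then $n$ is the rightmost position at which $w$ and $v^k$ differ and $\nu_n(v^k)>\nu_n(w)$, so the definition of $<_{revlex}$ gives $v^k<_{revlex}w$. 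If instead $\nu_n(w)=k(d-1)$, then each $\nu_n(w_i)=d-1$, so $w_i=x_{j_i}x_n^{d-1}$ with $j_i\leq l$; thus $\nu_i(w)=0$ for all $l<i<n$, and $\nu_l(w)=|\{i:j_i=l\}|\leq k$. Either all $j_i=l$ and $w=v^k$, or $\nu_l(w)<k=\nu_l(v^k)$, the rightmost disagreement sits at position $l$, and once more $v^k<_{revlex}w$.

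No real obstacle arises: the comparison reduces to a direct inspection of the last two positions $n$ and $l$, and no rearrangement of tensor factors (as in Propositions~\ref{reg1} and~\ref{reg2}) is needed. The only point worth checking carefully is the bound $\nu_n(w_i)\leq d-1$, which uses only the explicit shape $v=x_lx_n^{d-1}$ together with $w_i\geq_{lex}v$. With $v^k$ identified as the revlex-smallest generator of $I^k$, the set $\set(v^k)$ is empty and the proposition holds with nothing further to prove.
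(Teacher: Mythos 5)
Your proof is correct, and it takes a genuinely different route from the paper's. The paper treats the hypothesis $t\in\set(v^k)$ as potentially realizable: it writes $x_tv^k=x_jw$ with $w<_{revlex}v^k=w_1$, rules out the case $s=l$ by contradiction, and in the case $s>l$ (which forces $s=n$) exhibits a witness for $t\in\set(m)$; it never observes that no such $w$ can exist. You instead prove the sharper fact that $v^k$ is the $<_{revlex}$-least element of $G(I^k)$, so that $I^k_{<_{revlex}v^k}=0$, $\set(v^k)=\emptyset$, and the proposition holds vacuously once one knows $g(x_sm)=v^k$; the latter follows from Corollary~\ref{dec} as you say, or even more directly from $x_sm=x_{\min(m)}v^k\in(v^k)$ together with the minimality of $v^k$. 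Your verification of that minimality is sound: every $w_i\in\mathcal{L}(u,v)$ satisfies $\min(w_i)\leq l$ because $w_i\geq_{lex}x_lx_n^{d-1}$, hence $\nu_n(w_i)\leq d-1$ and $\nu_n(w)\leq k(d-1)$, and the two-step comparison at position $n$ and then at position $l$ is exactly what the paper's definition of $<_{revlex}$ requires (it is also consistent with the example, where $\set(u_1)=\set(v)=\emptyset$). What your approach buys is economy and a structural insight --- the first generator in any linear-quotients order has empty $\set$, and here that generator is $v^k$ --- whereas the paper's case analysis, while not incorrect, is reasoning from a hypothesis that is never satisfied.
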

\begin{proof} In this case, one may easy note that, we can have either $s\leq l$, which implies in fact that $s=l$, or $s>l$ and $\deg(\overline{m})=k+1$.

By Proposition~\ref{dec} we have $g(x_sm)=x_sm/x_{\min(m)}=v^k=w_1$. Since $t\in\set(w_1)$, we get $x_tw_1\in I^k_{<_{revlex}w_1}$. Hence, by the Remark~\ref{set}, $x_tw_1=x_jw$, for some  $w\in G(I^k)$, $w<_{revlex}w_1$, and $t>j\geq \min(w_1)=l$. Note that $\deg(\overline{w})\leq \deg(\overline{w_1})=k$, which implies $\deg(\overline{w})=k$. Therefore, we get that
	\[x_tx_sm=x_jx_{\min(m)}w.
\]
If $j=s$, then $x_tm=x_{n}w$ and $t\in\set(m)$. Therefore, we assume that $j\neq s$.

The case $s=l$ is impossible. Indeed, if $s=l$, then we must have $j>l$ since $j\geq \min(w_1)=l$ and $j\neq s$. Thus $j=n$ since $x_j\mid w_1=v^k$. But this is a contradiction since $t\neq j$.

If $s>l$, then $s=n$. In this case $\deg(\overline{m})=k+1$ which implies that $\deg(\overline{w})=k$ and $l<j<n$. Therefore $x_jw/x_s\in G(I^k)$. Thus $x_tm=x_n(x_jw/x_s)$ and $t\in \set(m)$.  
\end{proof}
By combining Propositions \ref{reg1}, \ref{reg2}, and \ref{reg3} we obtain:

\begin{Theorem}\label{reg} Let $I=(\mathcal{L}(u,v))\subseteq S$ be a lexsegment ideal generated in degree $d>1$ with a linear resolution which is not a completely lexsegment ideal. Then the decomposition function $g:M(I^k)\rightarrow G(I^k)$ associated to the increasing reverse lexicographical order of the generators from $G(I^k)$ is regular.
\end{Theorem}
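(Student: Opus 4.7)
The plan is straightforward once one observes that Propositions~\ref{reg1}, \ref{reg2}, and \ref{reg3} have already done all the combinatorial work: each of them handles one of the three mutually exclusive and exhaustive cases that can occur in the comparison of $x_s m/x_{\min(m)}$ with $v^k$ under the order $\prec$.

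First I would recall the definition of a regular decomposition function: $g\colon M(I^k)\to G(I^k)$ is regular if $\set(g(x_s m))\subseteq\set(m)$ for all $m\in G(I^k)$ and all $s\in\set(m)$. So I fix an arbitrary $m\in G(I^k)$ and an arbitrary $s\in\set(m)$, and I let $t\in\set(g(x_s m))$ be arbitrary; the goal is to prove $t\in\set(m)$.

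Next I would split into the three cases based on where $x_s m/x_{\min(m)}$ sits with respect to $v^k$ in the order $\prec$. Since $\prec$ is a total order on the monomials of degree $kd$ (comparing first by the degree of the $\{x_1,\dots,x_l\}$-part and then lexicographically), exactly one of $x_s m/x_{\min(m)}\succ v^k$, $x_s m/x_{\min(m)}=v^k$, or $x_s m/x_{\min(m)}\prec v^k$ must hold. In the first case, Proposition~\ref{reg1} applies directly and yields $t\in\set(m)$; in the second case, Proposition~\ref{reg3} applies; and in the third case, Proposition~\ref{reg2} applies. In each instance, Corollary~\ref{dec} provides the explicit formula for $g(x_s m)$ needed to feed the corresponding proposition.

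Since $m$ and $s$ were arbitrary, this establishes $\set(g(x_s m))\subseteq\set(m)$ for all admissible pairs, which is the definition of regularity. The only point requiring a remark is that the three cases really do cover every possibility; this is immediate from the trichotomy of $\prec$ on monomials of equal total degree, and the fact that $x_sm/x_{\min(m)}$ and $v^k$ both have degree $kd$. There is no real obstacle at this stage: the genuine work was packed into the three preparatory propositions and Corollary~\ref{dec}, and this final statement is purely an assembly step.
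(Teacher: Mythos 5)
Your proposal is correct and matches the paper exactly: the paper states Theorem~\ref{reg} as an immediate consequence of combining Propositions~\ref{reg1}, \ref{reg2}, and \ref{reg3}, which respectively handle the three mutually exclusive cases $x_sm/x_{\min(m)}\succ v^k$, $x_sm/x_{\min(m)}\prec v^k$, and $x_sm/x_{\min(m)}=v^k$. Your only addition, the explicit remark that the trichotomy of $\prec$ makes the three cases exhaustive, is a harmless clarification of what the paper leaves implicit.
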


By using the decomposition function, one may completely describe the resolution as J. Herzog and Y. Takayama showed, \cite{HT}.

\begin{Lemma}\cite{HT} Suppose $\deg\ u_1 \leq \deg\ u_2 \leq \cdots\leq \deg\ u_m.$ Then the iterated mapping
cone $\mathbb{F}$, derived from the sequence $u_1,\ldots,u_m,$ is a minimal graded free resolution
of $S/I$, and for all $i > 0$ the symbols
\[f(\sigma; u)\ \mbox{with}\ u\in G(I),\ \sigma \subset \set(u),\  |\sigma| = i - 1
\]
form a homogeneous basis of the $S-$module $F_i$. Moreover $\deg(f(\sigma; u)) = |\sigma| +\deg(u)$.
\end{Lemma}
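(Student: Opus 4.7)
The plan is to prove this by induction on the number of generators $m$, building the resolution via iterated mapping cones on the filtration $I_1 \subset I_2 \subset \cdots \subset I_m = I$, where $I_j = (u_1,\ldots,u_j)$. The base case is trivial since $I_1 = (u_1)$ is principal and its minimal resolution is the Koszul complex on $u_1$. For the inductive step, assume a minimal graded free resolution $\mathbb{F}^{(j-1)}$ of $S/I_{j-1}$ has been constructed with the claimed basis of $f(\sigma; u)$-symbols for $u \in \{u_1,\ldots,u_{j-1}\}$ and $\sigma \subseteq \set(u)$. Consider the short exact sequence
\[
0 \To S/(I_{j-1}:u_j)(-\deg u_j) \To S/I_{j-1} \To S/I_j \To 0,
\]
where the leftmost map is multiplication by $u_j$. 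Since $I$ has linear quotients, $(I_{j-1}:u_j)$ is generated by $\{x_s : s \in \set(u_j)\}$, so its minimal resolution is the Koszul complex $\mathbb{K}^{(j)}$ on these variables, whose basis in homological degree $i-1$ consists of the symbols $e_\sigma$ with $\sigma \subseteq \set(u_j)$, $|\sigma| = i-1$. Multiplication by $u_j$ lifts to a chain map $\phi^{(j)} : \mathbb{K}^{(j)}(-\deg u_j) \To \mathbb{F}^{(j-1)}$, and the mapping cone of $\phi^{(j)}$ is a graded free resolution of $S/I_j$. Adjoining the new basis elements $f(\sigma; u_j) := e_\sigma$ (shifted by $\deg u_j$) gives exactly the claimed basis, and the degree formula $\deg f(\sigma; u_j) = |\sigma| + \deg u_j$ is immediate from the shift.

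The main obstacle is establishing \emph{minimality} of the resulting complex, i.e., showing that the comparison map $\phi^{(j)}$ can be chosen so that all entries of its matrix representation lie in the maximal ideal $\mm = (x_1,\ldots,x_n)$. This is where the hypothesis that the decomposition function $g$ is regular becomes essential. Concretely, one constructs $\phi^{(j)}$ inductively on homological degree: in degree $0$ it sends $1 \mapsto u_j$; in degree $1$ it sends $e_s \mapsto$ (a preimage of $x_s u_j$ in $\mathbb{F}^{(j-1)}_0 = \bigoplus S\cdot u_i$), and the natural choice is $e_s \mapsto f(\emptyset; g(x_s u_j))$ times the cofactor $x_s u_j / g(x_s u_j)$. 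In higher degrees one lifts $\partial \circ \phi^{(j)}_{i-1}$ along $\partial^{(j-1)}$, and the regularity condition $\set(g(x_s u)) \subseteq \set(u)$ ensures that the required preimages can be chosen among the basis symbols $f(\tau; g(x_s u))$ with $\tau \subseteq \set(u_j)$. This guarantees that every cofactor appearing in the lift has positive degree, hence lies in $\mm$.

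Once minimality of $\phi^{(j)}$ is established at each step, the differentials of the cone have no unit entries: the horizontal differential on $\mathbb{F}^{(j-1)}$ is minimal by induction, the Koszul differential on $\mathbb{K}^{(j)}$ has entries $\pm x_s \in \mm$, and the connecting map is $\phi^{(j)}$ which is minimal by regularity. By standard arguments on minimal graded resolutions over the local/graded ring $S$, this shows that $\mathbb{F} := \mathbb{F}^{(m)}$ is a minimal graded free resolution of $S/I$, with free basis as described. Finally, the assumption $\deg u_1 \le \cdots \le \deg u_m$ is used only to guarantee that each $\phi^{(j)}$ can be chosen of non-negative internal degree, so that the cone is a graded complex in the standard sense; no part of the argument requires strict increase.
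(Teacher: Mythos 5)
The paper states this lemma without proof, citing Herzog--Takayama, so there is no internal argument to compare against; judged on its own, your mapping-cone induction has the right skeleton (the short exact sequence $0 \to S/(I_{j-1}:u_j)(-\deg u_j) \to S/I_{j-1} \to S/I_j \to 0$, the Koszul complex on $\{x_s : s \in \set(u_j)\}$, the identification of the new basis symbols $f(\sigma;u_j)$, and the degree formula are all correct). However, your treatment of the one genuinely nontrivial point, minimality, is flawed. Regularity of the decomposition function is \emph{not} a hypothesis of this lemma --- it is the hypothesis of the subsequent theorem that gives the explicit formula for $\partial$ --- so a proof of the lemma cannot ``essentially'' rely on it. The lemma holds for any ideal with linear quotients whose generators are listed in non-decreasing degree, regular decomposition function or not.

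The correct argument for minimality is a pure degree count, and it is exactly where the hypothesis $\deg u_1 \leq \cdots \leq \deg u_m$ enters. The comparison map $\phi^{(j)}_i$ sends a free module whose basis sits in internal degree $i + \deg u_j$ (the shifted Koszul complex) into $F^{(j-1)}_i$, whose basis elements $f(\sigma; u_t)$ with $t < j$ and $|\sigma| = i-1$ sit in internal degree $i - 1 + \deg u_t$. Hence every matrix entry of $\phi^{(j)}_i$ is homogeneous of degree $\deg u_j - \deg u_t + 1 \geq 1$, so it lies in $\mm$ regardless of how the lift is chosen; combined with minimality of $\mathbb{F}^{(j-1)}$ (induction) and of the Koszul differential, the cone is minimal. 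Your closing sentence inverts the role of this hypothesis: it is not merely about making the chain map of ``non-negative internal degree,'' it is precisely what forces the entries to have strictly positive degree. If some $\deg u_t$ exceeded $\deg u_j$ for $t<j$, an entry of degree zero (a unit) could occur and the cone would fail to be minimal --- which is why the degree ordering cannot be dispensed with, and why no appeal to regularity is needed or appropriate here.
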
 

\begin{Theorem}\cite{HT} Let $I$ be a monomial ideal of $S$ with linear quotients, and $\mathbb{F}_{\bullet}$ the graded
minimal free resolution of $S/I$. Suppose that the decomposition function $g : M(I) \rightarrow G(I)$ is regular. Then the chain map $\partial$ of $\mathbb{F}_{\bullet}$ is given by
\[\partial(f(\sigma; u)) = -\sum_{s\in\sigma}(-1)^{\alpha(\sigma;s)}x_sf(\sigma\setminus s;u)+\sum_{s\in\sigma}(-1)^{\alpha(\sigma;s)}\frac{x_su}{g(x_su)}f(\sigma\setminus s;g(x_su)),\]
if $\sigma\neq\emptyset$, and
\[\partial(f(\emptyset; u)) = u\] otherwise.
Here $\alpha(\sigma;s)=|\{t\in\sigma\ |\ t<s\}|$.
\end{Theorem}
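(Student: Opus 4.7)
The plan is to construct $\mathbb{F}_\bullet$ inductively as an iterated mapping cone built from the short exact sequences
\[
0 \to (S/(I_{j-1}:u_j))(-\deg u_j) \to S/I_{j-1} \to S/I_j \to 0, \qquad I_j=(u_1,\ldots,u_j).
\]
Linear quotients forces $I_{j-1}:u_j = (x_s \colon s\in\set(u_j))$, whose minimal resolution is the Koszul complex $K^{(j)}_\bullet$ on these variables, with basis $\{e_\sigma \colon \sigma\subseteq\set(u_j)\}$ and $\partial_K(e_\sigma)=\sum_{s\in\sigma}(-1)^{\alpha(\sigma;s)} x_s\, e_{\sigma\setminus s}$. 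Identifying the shifted copy of $e_\sigma$ in the cone with $f(\sigma; u_j)$ recovers the module-theoretic description from the preceding lemma, so only the differential requires work.

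Next, I would exhibit a comparison chain map $\phi^{(j)}\colon K^{(j)}_\bullet \to \mathbb{F}^{(j-1)}_\bullet$ lifting multiplication by $u_j$, proposing the explicit formula
\[
\phi^{(j)}(e_\sigma) \;=\; \sum_{s\in\sigma}(-1)^{\alpha(\sigma;s)}\,\frac{x_s u_j}{g(x_s u_j)}\,f\bigl(\sigma\setminus s;\, g(x_s u_j)\bigr).
\]
At $|\sigma|=0$ this reads $1 \mapsto u_j$, and at $|\sigma|=1$ it reads $e_s \mapsto (x_s u_j/g(x_s u_j))\cdot f(\emptyset; g(x_s u_j))$, which lifts $x_s\cdot u_j$ by definition of $g$. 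Granting that $\phi^{(j)}$ is well-defined and a chain map, the mapping-cone differential gives $\partial f(\sigma; u_j) = -\partial_K(e_\sigma) + \phi^{(j)}(e_\sigma)$, which is precisely the formula in the theorem; minimality is immediate since every coefficient $x_s$ or $x_s u_j/g(x_s u_j)$ has positive degree.

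The main obstacle is justifying the formula for $\phi^{(j)}$, and this is exactly where the regularity of $g$ is consumed. On the one hand, well-definedness requires each symbol $f(\sigma\setminus s;\, g(x_s u_j))$ to satisfy $\sigma\setminus s \subseteq \set(g(x_s u_j))$; I would prove this by induction on $|\sigma|$, using the regularity hypothesis $\set(g(x_s u))\subseteq\set(u)$ together with the elementary observation that $g(x_t g(x_s u_j)) = g(x_s x_t u_j) = g(x_s g(x_t u_j))$ for distinct $s,t\in\set(u_j)$. On the other hand, the chain-map identity $\partial\circ\phi^{(j)} = \phi^{(j)}\circ\partial_K$ reduces, after expanding both sides and collecting terms of the shape $(x_s x_t u_j/g(x_s x_t u_j))\,f(\sigma\setminus\{s,t\};\, g(x_s x_t u_j))$, to the same symmetric identity together with routine sign bookkeeping on $\alpha(\sigma;s)$. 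Feeding this into the mapping-cone recipe yields the differential as stated.
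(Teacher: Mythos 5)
This theorem is imported verbatim from Herzog--Takayama \cite{HT}; the paper offers no proof of it, so there is no internal argument to compare against, but your strategy---iterated mapping cones over the Koszul resolutions of $I_{j-1}:u_j$, with an explicit comparison map $\phi^{(j)}$ lifting multiplication by $u_j$---is exactly the one used in \cite{HT}. The problem is in the step you yourself identify as the crux. You propose to prove that $\sigma\setminus s\subseteq\set(g(x_su_j))$ for all $\sigma\subseteq\set(u_j)$, deducing it from regularity. This containment is false in general, and regularity points in the wrong direction: it asserts $\set(g(x_su))\subseteq\set(u)$, i.e.\ an upper bound on $\set(g(x_su))$, not a lower one. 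The paper's own example in Section~3 is a counterexample: for $u_5=x_2^2$ one has $\set(u_5)=\{3,4\}$ and $g(x_4u_5)=g(x_2^2x_4)=u_1=x_2x_4$ with $\set(u_1)=\emptyset$, so for $\sigma=\{3,4\}$ and $s=4$ the symbol $f(\{3\};u_1)$ is not a basis element; the paper (following \cite{HT}) instead adopts the convention $f(\tau;w)=0$ whenever $\tau\nsubseteq\set(w)$, and the displayed computation of $\partial_2(f(\{3,4\};u_5))$ explicitly discards that term for precisely this reason. Your induction therefore cannot get off the ground as stated.

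Once the zero convention is adopted, $\phi^{(j)}$ is well defined, but the chain-map identity $\partial\circ\phi^{(j)}=\phi^{(j)}\circ\partial_K$ is then more than ``routine sign bookkeeping'': one must check that the would-be contributions of the vanishing symbols are matched by genuine cancellations on the other side, and this is exactly where regularity is consumed. The identity you call an ``elementary observation'' is also not elementary: $g(x_sg(x_tu))=g(x_tg(x_su))$ for $s,t\in\set(u)$ is a lemma of \cite{HT} whose proof uses regularity, and your intermediate expression $g(x_sx_tu)$ is not justified to coincide with either side, since for a monomial $m\in M(I)$ and a monomial $c$ the generators $g(cm)$ and $g(m)$ may differ (only the inequality of indices is automatic). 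The overall architecture of your argument is correct and completable, but the two points above are the entire technical content of the theorem, and they are precisely the parts your sketch either asserts incorrectly or leaves unproved.
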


In our specific context we get the following
\begin{Corollary} Let $I=(\mathcal{L}(u,v))\subset S$ be a lexsegment ideal with linear quotients with respect to increasing reverse lexicographical order which is not a completely lesegment ideal and $\mathbb{F}_{\bullet}$ the graded minimal free resolution of $S/I^k$. Then the chain map of $\mathbb{F}_{\bullet}$ is given by
\[\partial(f(\sigma; w)) = \sum_{\stackrel{s\in\sigma:}{x_sw/x_{\min(w)}\succeq v^k}}(-1)^{\alpha(\sigma;s)}x_{\min(w)}f\left(\sigma\setminus s;\frac{x_sw}{x_{\min(w)}}\right)+\]\[+\sum_{\stackrel{s\in\sigma:}{x_sw/x_{\min(w)}\prec v^k}}(-1)^{\alpha(\sigma;s)}x_{\min(\tilde{w})}f\left(\sigma\setminus s;\frac{x_sw}{x_{\min(\tilde{w})}}\right)-\sum_{s\in\sigma}(-1)^{\alpha(\sigma;s)}x_sf(\sigma\setminus s;w),\]
if $\sigma\neq\emptyset$, and
\[\partial(f(\emptyset; w)) = w\] otherwise. For convenience we set $f(\sigma;w)=0$ if $\sigma\nsubseteq\set{w}$.
\end{Corollary}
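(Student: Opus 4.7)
The plan is to derive this corollary by plugging the explicit decomposition function of Corollary~\ref{dec} into the general Herzog--Takayama chain map formula stated just above. The applicability of that formula requires the decomposition function to be regular, and this has already been secured by Theorem~\ref{reg}, so no additional work is needed on that front.

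Concretely, I would start from the Herzog--Takayama expression
\[\partial(f(\sigma; w)) = -\sum_{s\in\sigma}(-1)^{\alpha(\sigma;s)}x_sf(\sigma\setminus s;w)+\sum_{s\in\sigma}(-1)^{\alpha(\sigma;s)}\frac{x_sw}{g(x_sw)}f(\sigma\setminus s;g(x_sw)),\]
and split the second sum according to the dichotomy for $g(x_sw)$ described in Corollary~\ref{dec}. In the branch where $x_sw/x_{\min(w)}\succeq v^k$ one has $g(x_sw)=x_sw/x_{\min(w)}$, hence $x_sw/g(x_sw)=x_{\min(w)}$; in the branch where $x_sw/x_{\min(w)}\prec v^k$ one has $g(x_sw)=x_sw/x_{\min(\tilde{w})}$, hence $x_sw/g(x_sw)=x_{\min(\tilde{w})}$. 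Substituting these two coefficients into the respective subsums produces exactly the first two sums on the right-hand side of the claim, while the term $-\sum_{s\in\sigma}(-1)^{\alpha(\sigma;s)}x_sf(\sigma\setminus s;w)$ transcribes verbatim as the final sum. The base case $\sigma=\emptyset$ is just the statement $\partial(f(\emptyset;w))=w$ already in the Herzog--Takayama formula.

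The convention $f(\sigma;w)=0$ whenever $\sigma\nsubseteq\set(w)$ is what makes the resulting expression well defined: regularity (Theorem~\ref{reg}) only guarantees the inclusion $\set(g(x_sw))\subseteq\set(w)$, so summands in which $\sigma\setminus s$ fails to lie inside $\set(g(x_sw))$ must vanish, and this convention absorbs them.

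The proof is therefore essentially a substitution, and the only mildly delicate point is bookkeeping at the boundary case $x_sw/x_{\min(w)}=v^k$. There the convention $\succeq$ assigns the term to the first branch of the final formula, which is consistent because Proposition~\ref{dec1} was verified precisely for $x_sw/x_{\min(w)}\succeq v^k$ (including equality), and Proposition~\ref{reg3} handled the corresponding regularity check. Once one observes this compatibility, no genuine computation remains, so this is where I expect essentially all of the (modest) difficulty to reside.
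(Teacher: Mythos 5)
Your proposal is correct and matches the paper's own (implicit) argument: the corollary is obtained exactly by substituting the explicit decomposition function of Corollary~\ref{dec} into the Herzog--Takayama chain map formula, with Theorem~\ref{reg} supplying the required regularity. The computation of the coefficients $x_sw/g(x_sw)$ in the two branches and the handling of the boundary case $x_sw/x_{\min(w)}=v^k$ are exactly as the paper intends.
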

\section{ An example}
Let $u=x_1x_3$ and $v=x_2x_4$  be monomials in the polynomial ring $S=k[x_1,x_2,x_3,x_4]$. Then $$\mathcal{L}(u,v)=\{x_1x_3,\ x_1x_4,\ x_2^2,\ x_2x_3,\ x_2x_4\}.$$ The ideal $I=(\mathcal{L}(u,v))$ is a lexsegment ideal which is not completely lexsegment and it has linear quotients with respect to the following order of the generators: $u_1=x_2x_4,\ u_2=x_1x_4,\ u_3=x_2x_3,\ u_4=x_1x_3,\ u_5=x_2^2$. We have  $\set(u_1)=\emptyset,\ \set(u_2)=\{2\},\ \set(u_3)=\{4\},\ \set(u_4)=\{2,4\},\ \set(u_5)=\{3,4\}$. Note that, in this case, the integer $l$ that we fix for defining the order $\prec$ is $l=2$. Let $\mathbb{F}_{\bullet}$ be the minimal graded free resolution of $S/I$.

Since $\max\{|\set(w)|\mid w\in\mathcal{L}(u,v)\}=2$, we have $F_i=0$, for all $i\geq4$.

A basis for the $S-$module $F_1$ is $\{f(\emptyset;u_1),\ f(\emptyset;u_2),\ f(\emptyset;u_3),\ f(\emptyset;u_4),\ f(\emptyset;u_5)\}$.
	
A basis for the $S-$module $F_2$ is $$\{f(\{2\};u_2),\ f(\{4\};u_3),\ f(\{2\};u_4),\ f(\{4\};u_4),\ f(\{3\};u_5),\ f(\{4\};u_5)\}.$$

A basis for the $S-$module $F_3$ is $\{f(\{2,4\};u_4),\ f(\{3,4\};u_5)\}$.

We have the minimal graded free resolution $\mathbb{F}_{\bullet}$:
	\[0\rightarrow S(-4)^2\stackrel{\partial_2}{\rightarrow} S(-3)^6\stackrel{\partial_1}{\rightarrow} S(-2)^5\stackrel{\partial_0}{\rightarrow} S\rightarrow S/I\rightarrow 0
\]
where the maps  are
	\[\partial_0(f(\emptyset;u_i))=u_i,\ \mbox{for}\ 1\leq i\leq 5, 
\]
so \[\partial_0=
\left(\begin {array}{ccccc}
			x_2x_4& x_1x_4& x_2x_3& x_1x_3& x_2^2
	\end{array}\right) .\]
\[\begin{array}{lll}
\partial_1(f(\{2\};u_2))&= &x_1f(\emptyset;u_2)-x_2f(\emptyset;u_1),\\
\partial_1(f(\{4\};u_3))&= &x_3f(\emptyset;u_1)-x_4f(\emptyset;u_3),\\
\partial_1(f(\{2\};u_4))&= &x_1f(\emptyset;u_3)-x_2f(\emptyset;u_4),\\
\partial_1(f(\{4\};u_4))&= &x_3f(\emptyset;u_2)-x_4f(\emptyset;u_4),\\
\partial_1(f(\{3\};u_5))&= &x_2f(\emptyset;u_3)-x_3f(\emptyset;u_5),\\
\partial_1(f(\{4\};u_5))&= &x_2f(\emptyset;u_1)-x_4f(\emptyset;u_5),\\
\end{array}\]
so
\[\partial_1=
\left(\begin {array}{cccccc}
			x_1& x_3& 0& 0& 0&x_2\\
			-x_2& 0& 0& x_3& 0&0\\
			0& -x_4& x_1& 0& x_2&0\\
			0& 0& -x_2& -x_4& 0& 0\\
			0& 0& 0& 0& -x_3& -x_4
	\end{array}\right) .\]

	\[\partial_2(f(\{2,4\};u_4))=x_1f(\{4\};u_3)-x_3f(\{2\};u_2)-x_2f(\{4\};u_4)+x_4f(\{2\};u_4),\]
\[\partial_2(f(\{3,4\};u_5))=x_2f(\{4\};u_3)-x_2f(\{3\};u_1)-x_3f(\{4\};u_5)+x_4f(\{3\};u_5)=\]\[=x_2f(\{4\};u_3)-x_3f(\{4\};u_5)+x_4f(\{3\};u_5),
\]
since $\{3\}\nsubseteq\set(u_1)$, so\[\partial_2=
\left(\begin {array}{cc}
			-x_3&0\\
			x_1&x_2\\
			x_4&0\\
			-x_2&0\\
			0&x_4\\
			0&-x_3
	\end{array}\right) .\]

\end{document}